\newcommand{\be}{\begin{eqnarray*}}                           
\newcommand{\en}{\end{eqnarray*}}
\newcommand{\bes}{\begin{eqnarray}}                           
\newcommand{\ens}{\end{eqnarray}}
\newtheorem{theorem}{Theorem}[section]
\newtheorem{lemma}{Lemma}[section]
\def\bq{\begin{equation}}
\def\eq{\end{equation}}
\def\bqq{\begin{eqnarray*}}
\def\eqq{\end{eqnarray*}}
\begin{document}

\begin{frontmatter}

\title{{ On an inverse problem in
the parabolic equation arising from groundwater pollution problem }}

\author[Affil1]{Thanh Binh Tran }
\author[Affil2]{Huy Tuan Nguyen\corref{cor1}}
\author[Affil3]{Van Thinh Nguyen}
\author[Affil4]{Vo Anh Khoa}

\address[Affil1]{Department of Mathematics and Applications, Sai Gon University, 
Ho Chi Minh, Vietnam}
\address[Affil2]{Faculty of Mathematics and
Statistics, Ton Duc Thang University,
 Ho Chi Minh city,
Vietnam.}
\address[Affil4]{Mathematics and Computer Science Division, Gran Sasso Science Institute, Viale Francesco Crispi 7, 67100 L'Aquila, Italy.}
\address[Affil3]{Department of Civil and Environmental Engineering, Seoul National University, Republic of Korea}

\cortext[cor1]{Corresponding Author: Nguyen Huy Tuan; Email: thnguyen2683@gmail.com;}

\begin{abstract}
{In this paper, we consider an inverse problem to determine a
\textcolor{red}{source term} in a parabolic equation, where the data are obtained at a certain time.
In general, this problem is ill-posed, therefore the Tikhonov regularization
method is proposed to solve the problem. In the theoretical results, a priori error estimate between the exact solution and its regularized solution is obtained. We also propose both methods, a priori and a posteriori parameter choice rules. In addition, the proposed methods have been verified by numerical experiments to estimate the errors between the regularized solutions and exact solutions. Eventually, from the numerical results it shows that the a posteriori parameter choice rule method gives a better the convergence speed in comparison with the a priori parameter choice rule method in some specific applications. }\\
{\it Keywords and phrases:}  Cauchy problem; Ill-posed problem; Convergence
estimates.\\
{\it Mathematics subject Classification 2000:} 35K05, 35K99, 47J06, 47H10
\end{abstract}
\end{frontmatter}

\section{Introduction}
Groundwater is crucial to human being, environment and economy, because a large portion of drinking water comes from groundwater, and it is extracted for commercial, industrial and irrigation uses. Groundwater also sustains stream flow during dry periods, and is critical to the function of streams, wetlands and other aquatic environments. Therefore, protecting the safety and security of groundwater is essential for communities, and for the environment.
In recent years, mathematical models have \textcolor{red}{been} used to analyze ground water system. There are two notable approaches in dealing with groundwater modeling, the forward and backward approaches. The former is going to predict unknown variables by solving appropriate governing equations, while the latter is going to determine unknown physical parameters. 
Most of groundwater models are distributed parameter models, where the parameters used in the modeling equations are not directly obtained from physical observations, but from trial-and-error and graphical fitting techniques. If large errors are included in mathematical model structure, model parameters, sink/source terms and boundary conditions, the model cannot produce accurate results. To deal with this issue, the inverse problem of parameter identification has been applied. In groundwater applications such as in finding a previous pollution source intensity from observation data of the pollutant concentrations at a later time, or in designing the final state of melting and freezing processes, it is necessary to construct a heat source at any given time from the final outcome state data. The groundwater inverse problem has been studied since the middle of 1970s by McLaughin (1975), Yeh (1986), Kuiper (1986), Carrera (1987), Ginn and Cushman (1990) and Sun (1994), etc (see in \cite{Mc,Ye,Ca,GC,Ku,Su}). Some remarkable results on this research area should be mentioned by McLaughlin and Townley (1996) \cite{MT} and Poeter and Hill (1997) \cite{PH}. Under consideration of a solute diffusion, the flow and self-purifying function of watershed system, the concentration of pollution
$u(x,t)$  at any time in a watershed is described by \textcolor{red}{the} following one-dimensional linear parabolic equation:
\begin{equation}
\frac{\partial u}{\partial t}-\eta\frac{\partial^{2}u}{\partial x^{2}}+\nu\frac{\partial u}{\partial x}+\gamma u=P\left(x,t\right),\quad x\in\Omega,t>0,\label{eq:1}
\end{equation}
where $\Omega\in\mathbb{R}$ is the spatial studied domain, 
$\eta$ is the diffusion coefficient, $\nu$ is mean velocity of water in the 
watershed, and $\gamma$ is the self-purifying
function of the watershed, $P\left(x,t\right)$ is the source
term causing the pollution function $u\left(x,t\right)$.
By setting
\[
w\left(x,t\right)=u\left(x,t\right)\mbox{exp}\left(\dfrac{\nu}{2\eta}x-\left(\dfrac{\nu^{2}}{4\eta}+\gamma\right)t\right)
\]
and 
\[F\left(x,t\right)=P\left(x,t\right)\mbox{exp}\left(\left(\dfrac{\nu^{2}}{4\eta}+\gamma\right)t-\dfrac{\nu}{2\eta}x\right),
\]
Then Eq. (\ref{eq:1}) becomes
\[
\frac{\partial w}{\partial t}-\eta\frac{\partial^{2}w}{\partial x^{2}}=F\left(x,t\right).
\]
This equation is well-known \textcolor{red}{to be the} parabolic heat equation with
time-dependent coefficients.  
This equation has been investigated for the heat source with either temporal
\cite{FL,JL,YF}, or \textcolor{red}{spatial-dependent} \cite{AB,CF,Sa,YF1,YF2} only.
There are few studies on identification of the source term depending on both time and space in term of a separable form of $F(x,t)$, as  $F(x,t)=\varphi(t)f(x)$; where
$\varphi\left(t\right)$ is a given function; for instance Hassanov \cite{Ha}  identified the heat source in the form
of $F(x,t)=F(x)H(t)$ for the variable coefficient heat conduction
equation; $u_{t}=(k(x)u_{x})_{x}+F(x)H(t)$ under the variational method. However, in the case with the time-dependent coefficient of ${\displaystyle \frac{\partial^{2}w}{\partial x^{2}}}$, there are still limited results.\\
In this study, we consider the equation for groundwater pollution as follows:
\begin{equation}
\frac{\partial u}{\partial t}-\frac{\partial}{\partial x}\left(a\left(t\right)\frac{\partial u}{\partial x}\right)=\varphi\left(t\right)f\left(x\right),\quad\left(x,t\right)\in\left(0,\pi\right)\times\left(0,T\right),\label{eq:2}
\end{equation}
with initial and final conditions
\begin{equation}
u\left(x,0\right)=0,\quad u\left(x,T\right)=g\left(x\right),\quad x\in\left(0,\pi\right),\label{eq:3}
\end{equation}
and boundary condition
\begin{equation}
u\left(0,t\right)=u\left(\pi,t\right)=0.\label{eq:4}
\end{equation}
Here, $a\left(t\right)>0$, $g\left(x\right)$ and $\varphi\left(t\right)$
are given functions. In this paper, we will determine the source
term $f\left(x\right)$ from the inexact observed data of $\varphi\left(t\right)$
and $g\left(x\right)$.\\
Let $\left\Vert .\right\Vert $ and $\left\langle .,.\right\rangle $
be the norm and the inner product in $L^{2}\left(0,\pi\right)$, respectively.
Now, we take an orthonormal basis in $L^{2}\left(0,\pi\right)$ satisfying
the boundary condition (\ref{eq:4}), particularly the basic function $\sqrt{\dfrac{2}{\pi}}\sin\left(nx\right)$ for $n\in\mathbb{N}$ \textcolor{red}{satisfies that} condition. Then, by an elementary calculation the problem
(\ref{eq:2}) under the conditions (\ref{eq:3}) and (\ref{eq:4}) can be transformed
into \textcolor{red}{the} following corresponding problem 
\begin{equation}
\frac{d}{dt}\left\langle u\left(x,t\right),\sin\left(nx\right)\right\rangle +n^{2}a\left(t\right)\left\langle u\left(x,t\right),\sin\left(nx\right)\right\rangle =\varphi\left(t\right)\left\langle f\left(x\right),\sin\left(nx\right)\right\rangle ,\quad t\in\left(0,T\right),\label{eq:5}
\end{equation}
\begin{equation}
\left\langle u\left(x,0\right),\sin\left(nx\right)\right\rangle =0,\quad\left\langle u\left(x,T\right),\sin\left(nx\right)\right\rangle =\left\langle g\left(x\right),\sin\left(nx\right)\right\rangle .\label{eq:6}
\end{equation}
By setting ${\displaystyle A\left(t\right)=\int_{0}^{t}a\left(s\right)ds}$,
we can solve the ordinary differential equation (\ref{eq:5}) with
the contitions (\ref{eq:6}). We thus obtain
\begin{equation}
\left\langle f\left(x\right),\sin\left(nx\right)\right\rangle =e^{n^{2}A\left(T\right)}\left(\int_{0}^{T}e^{n^{2}A\left(t\right)}\varphi\left(t\right)dt\right)^{-1}\left\langle g\left(x\right),\sin\left(nx\right)\right\rangle ,\label{eq:7}
\end{equation}
which leads to
\begin{equation}
f\left(x\right)=\sum_{n=1}^{\infty}e^{n^{2}A\left(T\right)}\left(\int_{0}^{T}e^{n^{2}A\left(t\right)}\varphi\left(t\right)dt\right)^{-1}g_{n}\sin\left(nx\right),\label{eq:8}
\end{equation}
where $g_{n}=\dfrac{2}{\pi}\left\langle g\left(x\right),\sin\left(nx\right)\right\rangle $.\\
Note that $e^{n^{2}A\left(T\right)}$ increases rather quickly once
$n$ becomes large. Thus, the exact data function $g\left(x\right)$
must satisfy that $g_{n}$ decays at least as the same speed of $e^{n^{2}A\left(t\right)}$. However, in application the
input data $g\left(x\right)$ from observations will never be exact due to the measurements. We assume the data functions $g_{\epsilon}\left(x\right)\in L^{2}\left(0,\pi\right)$,
and $\varphi\left(t\right),\varphi_{\epsilon}\left(t\right)\in L^{2}\left(0,T\right)$
satisfy
\begin{equation}
\left\Vert g_{\epsilon}-g\right\Vert \le\epsilon,\quad\left\Vert \varphi_{\epsilon}-\varphi\right\Vert\le\epsilon,
\end{equation}
where $\epsilon>0$ represents a noise from observations.

The main objective of this paper is to determine a conditional stability,
and provide the revised generalized Tikhonov regularization
method. In addition, the stability estimate between the regularization solution
and the exact solution is obtained. For explanation of this method,
we impose an a priori bound on the data
\begin{equation}
\left\Vert f\right\Vert _{H^{k}\left(0,\pi\right)}\le M,\quad k\ge0,
\end{equation}
where $M\ge0$ is a constant, and $\left\Vert .\right\Vert _{H^{k}\left(0,\pi\right)}$
denotes the norm in the Sobolev space $H^{k}\left(0,\pi\right)$ of
order $k$ can be naturally defined in terms of Fourier series whose
coefficients decay rapidly; namely,
\begin{equation}
H^{k}\left(0,\pi\right):=\left\{ f\in L^{2}\left(0,\pi\right):\left\Vert f\right\Vert _{H^{k}\left(0,\pi\right)}<\infty\right\} ,
\end{equation}
equipped with the norm
\[
\left\Vert f\right\Vert _{H^{k}\left(0,\pi\right)}=\sqrt{\sum_{n=1}^{\infty}\left(1+n^{2}\right)^{k}f_{n}^{2}},
\]
where $f_{n}$ defined by $f_{n}=\left\langle f,X_{n}\right\rangle ,X_{n}=\sqrt{\dfrac{2}{\pi}}\sin\left(nx\right)$
is the Fourier coefficient of $f$.\\
As a regularization method, the Tikhonov method has been used to solve
ill-posed problems in a number of publications. However, most of previous  works focus on an a priori choice of the regularization parameter. There is usually a defect in any a priori method; i.e. the a priori choice of the regularization parameter depends obviously
on the a priori bound $M$ of the unknown solution. In fact,
the a priori bound $M$ cannot be known exactly in practice, and working
with a wrong constant $M$ may lead to a bad regularization solution.
In this paper, we mainly consider the a posteriori choice of
a regularization parameter for the mollification method. Using the
discrepancy principle we provide a new posteriori parameter choice rule. \\

The outline of this paper is as follows. In Section 2, a conditional stability is introduced. A Tikhonov regularization and its convergence under an a priori parameter choice rule is presented in Section 3. Similarly to Section 3, another Tikhonov regularization and its convergence under a posteriori parameter choice rule is shown in Section 4. In Section 5, we introduce two numerical examples, which are implemented from proposal regularization methods, the numerical results are compared with exact solutions.  
\section{A conditional stability}
Let $a,\varphi,\varphi_{\epsilon}:\left[0,T\right]\to\mathbb{R}$
be continuous functions. We suppose that there exist constants $B_{1},B_{2},C_{1},C_{2},D_{1},D_{2}>0$
such that
\begin{equation}
B_{1}\le\varphi\left(t\right)\le B_{2},\quad C_{1}\le\varphi_{\epsilon}\left(t\right)\le C_{2},\quad D_{1}\le a\left(t\right)\le D_{2}.\label{eq:12}
\end{equation}
Hereafter, let us set
\begin{equation}
A\left(t\right)-A\left(T\right)=B\left(t\right),\quad\Phi\left(n,h\right)=\int_{0}^{T}e^{n^{2}B\left(t\right)}h\left(t\right)dt,\label{eq:13}
\end{equation}
where $h$ \textcolor{red}{plays a role as} $\varphi$, and $\varphi_{\epsilon}$ by implication.
Then, we can obtain the following conditional stability.
\begin{lemma}
For all continuous functions $h\in\left[E_{1},E_{2}\right]$, then 
\begin{equation}
\left(\Phi\left(n,h\right)\right)^{k}\le\begin{cases}
E_{2}^{k}n^{-2k}D_{1}^{-k}\left(1-e^{-n^{2}D_{2}T}\right)^{k} & ,k\ge0,\\
E_{1}^{k}n^{-2k}D_{2}^{-k}\left(1-e^{-D_{1}T}\right)^{k} & ,k<0,
\end{cases}
\end{equation}
for $n\in\mathbb{N}$.\end{lemma}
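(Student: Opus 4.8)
The plan is to squeeze the kernel $e^{n^{2}B(t)}$ between two pure exponentials, using the monotonicity of $A$ together with the bounds \eqref{eq:12}, and then integrate explicitly. Since $a(s)>0$ we have $B(t)=A(t)-A(T)=-\int_{t}^{T}a(s)\,ds\le 0$ on $[0,T]$, and from $D_{1}\le a(s)\le D_{2}$ it follows that
\[
e^{-n^{2}D_{2}(T-t)}\le e^{n^{2}B(t)}\le e^{-n^{2}D_{1}(T-t)},\qquad t\in[0,T].
\]
Throughout I assume $0<E_{1}\le h(t)\le E_{2}$ (as holds when $h$ is $\varphi$ or $\varphi_{\epsilon}$, by \eqref{eq:12}); in particular $\Phi(n,h)>0$.

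For $k\ge 0$, using $h\le E_{2}$ and the upper bound on the kernel,
\[
0<\Phi(n,h)\le E_{2}\int_{0}^{T}e^{-n^{2}D_{1}(T-t)}\,dt=\frac{E_{2}}{n^{2}D_{1}}\bigl(1-e^{-n^{2}D_{1}T}\bigr)\le\frac{E_{2}}{n^{2}D_{1}}\bigl(1-e^{-n^{2}D_{2}T}\bigr),
\]
where the last step uses $D_{1}\le D_{2}$ and the monotonicity of $x\mapsto e^{-x}$; since $t\mapsto t^{k}$ is nondecreasing on $[0,\infty)$, raising to the power $k$ gives the first case. For $k<0$, using $h\ge E_{1}$ and the lower bound on the kernel,
\[
\Phi(n,h)\ge E_{1}\int_{0}^{T}e^{-n^{2}D_{2}(T-t)}\,dt=\frac{E_{1}}{n^{2}D_{2}}\bigl(1-e^{-n^{2}D_{2}T}\bigr)\ge\frac{E_{1}}{n^{2}D_{2}}\bigl(1-e^{-D_{1}T}\bigr)>0,
\]
where the last step uses $n\ge 1$ and $D_{2}\ge D_{1}$, so that $n^{2}D_{2}T\ge D_{1}T$; since $t\mapsto t^{k}$ is decreasing on $(0,\infty)$ for $k<0$, raising to the power $k$ reverses the inequality and yields the second case.

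All of this is elementary; the only places that need care are bookkeeping the direction of the inequality when passing to the $k$-th power, and making sure the quantity raised to a negative power is strictly positive — which is exactly where the hypotheses $E_{1}>0$ and $n\ge 1$ enter.
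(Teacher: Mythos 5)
Your proof is correct, and it is precisely the ``elementary calculation'' the paper invokes without writing out: bounding the kernel $e^{n^{2}B(t)}$ by $e^{-n^{2}D_{2}(T-t)}$ and $e^{-n^{2}D_{1}(T-t)}$, integrating, and handling the direction of the inequality when raising to the power $k$. The only hypotheses you use beyond the paper's, namely $E_{1}>0$ so that $\Phi(n,h)>0$ in the $k<0$ case, are exactly those satisfied by $\varphi$ and $\varphi_{\epsilon}$ via \eqref{eq:12}, so nothing is missing.
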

\begin{proof}
The proof is simple by elementary calculation.\end{proof}
\begin{theorem}
If there exists $M\ge0$ such that $\left\Vert f\right\Vert _{H^{k}\left(0,\pi\right)}\le M$,
then
\begin{equation}
\left\Vert f\right\Vert \le\left(\frac{D_{2}}{B_{1}\left(1-e^{-D_{1}T}\right)}\right)^{\frac{k}{k+2}}M^{\frac{2}{k+2}}\left\Vert g\right\Vert ^{\frac{k}{k+2}}.
\end{equation}
\end{theorem}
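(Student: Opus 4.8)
The plan is to diagonalize in the orthonormal basis $X_n=\sqrt{2/\pi}\sin(nx)$ and then read off the estimate from a single interpolation (Hölder) inequality, with the Lemma supplying the only quantitative input. First I would observe that the factor appearing in the representation formula for $f$ recalled above is exactly $\Phi(n,\varphi)^{-1}$: since
$e^{n^2A(T)}\bigl(\int_0^T e^{n^2A(t)}\varphi(t)\,dt\bigr)^{-1}=\bigl(\int_0^T e^{n^2(A(t)-A(T))}\varphi(t)\,dt\bigr)^{-1}=\Phi(n,\varphi)^{-1}$,
the Fourier coefficients $f_n=\langle f,X_n\rangle$ and $g_n=\langle g,X_n\rangle$ satisfy $f_n=\Phi(n,\varphi)^{-1}g_n$, i.e. $g_n=\Phi(n,\varphi)f_n$. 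By Parseval, $\|f\|^2=\sum_n f_n^2$, $\|g\|^2=\sum_n\Phi(n,\varphi)^2 f_n^2$, and $\sum_n(1+n^2)^k f_n^2=\|f\|_{H^k}^2\le M^2$.

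Next I would apply the Lemma in the regime $k<0$, specialized to $k=-1$ and $h=\varphi\in[B_1,B_2]$ (so $E_1=B_1$), which yields the one-sided bound
\[
\Phi(n,\varphi)^{-1}\le\frac{D_2}{B_1\left(1-e^{-D_1 T}\right)}\,n^2\le\frac{D_2}{B_1\left(1-e^{-D_1 T}\right)}\,(1+n^2),\qquad n\in\N .
\]
Abbreviate $\kappa:=D_2/\bigl(B_1(1-e^{-D_1 T})\bigr)$, the constant occurring in the theorem. The heart of the argument is the pointwise identity
\[
f_n^2=\bigl(g_n^2\bigr)^{\frac{k}{k+2}}\Bigl((1+n^2)^k f_n^2\Bigr)^{\frac{2}{k+2}}\left(\frac{\Phi(n,\varphi)^{-1}}{1+n^2}\right)^{\frac{2k}{k+2}},
\]
valid because the powers of $f_n$, of $g_n=\Phi(n,\varphi)f_n$, and of $(1+n^2)$ on the right all collapse correctly; by the displayed bound the last factor is $\le\kappa^{2k/(k+2)}$, using $2k/(k+2)\ge 0$ for $k\ge 0$. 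Summing over $n$ and applying Hölder's inequality with conjugate exponents $\frac{k+2}{k}$ and $\frac{k+2}{2}$ to the product of the first two factors gives
\[
\|f\|^2\le\kappa^{\frac{2k}{k+2}}\Bigl(\sum_n g_n^2\Bigr)^{\frac{k}{k+2}}\Bigl(\sum_n(1+n^2)^k f_n^2\Bigr)^{\frac{2}{k+2}}\le\kappa^{\frac{2k}{k+2}}\,\|g\|^{\frac{2k}{k+2}}\,M^{\frac{4}{k+2}},
\]
and taking square roots is precisely the asserted inequality.

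The only genuinely delicate point is the choice of the interpolation weights: the exponents $k/(k+2)$ on $g_n^2$ and $2/(k+2)$ on $(1+n^2)^kf_n^2$ must be picked so that the residual weight is exactly a nonnegative power of $\Phi(n,\varphi)^{-1}/(1+n^2)$ — any other split leaves behind a power of $n$ that the Lemma cannot absorb, since $\Phi(n,\varphi)^{-1}$ grows like $n^2$. Everything else is routine: Parseval, Hölder, and the elementary estimate behind the Lemma (a change of variables $u=B(t)$ in $\int_0^T e^{n^2B(t)}\,dt$ together with $D_1\le a\le D_2$ and $n\ge1$, which is also what produces $1-e^{-D_1T}$ rather than $1-e^{-D_2T}$ in $\kappa$). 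As a sanity check, $k=0$ degenerates to the trivial bound $\|f\|=\|f\|_{H^0}\le M$, consistent with the stated formula.
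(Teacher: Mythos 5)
Your proposal is correct and follows essentially the same route as the paper: the identity $g_n=\Phi(n,\varphi)f_n$, H\"older's inequality with conjugate exponents $\frac{k+2}{k}$ and $\frac{k+2}{2}$, the lower bound $\Phi(n,\varphi)\ge B_1 n^{-2}D_2^{-1}\left(1-e^{-D_1T}\right)$ from the Lemma, and the a priori bound $\|f\|_{H^k}\le M$. The only cosmetic difference is that you absorb the constant $\kappa$ pointwise before applying H\"older (and use $(1+n^2)^k$ directly instead of $n^{2k}\le(1+n^2)^k$), whereas the paper applies H\"older first and then estimates $\Phi(n,\varphi)^{-k}$; the content is identical.
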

\begin{proof}
Using Holder's inequality, we first have
\begin{eqnarray}
\left\Vert f\right\Vert ^{2} & = & \sum_{n=1}^{\infty}\left(\int_{0}^{T}e^{n^{2}B\left(t\right)}\varphi\left(t\right)dt\right)^{-2}g_{n}^{\frac{4}{k+2}}g_{n}^{\frac{2k}{k+2}}\nonumber \\
 & \le & \left[\sum_{n=1}^{\infty}\left(\int_{0}^{T}e^{n^{2}B\left(t\right)}\varphi\left(t\right)dt\right)^{-\left(k+2\right)}g_{n}^{2}\right]^{\frac{2}{k+2}}\left(\sum_{n=1}^{\infty}g_{n}^{2}\right)^{\frac{k}{k+2}}.
\end{eqnarray}
\textcolor{red}{Then} from (\ref{eq:7}), the inequality \textcolor{red}{becomes}
\begin{equation}
\left\Vert f\right\Vert ^{2}\le\left[\sum_{n=1}^{\infty}\left(\int_{0}^{T}e^{n^{2}B\left(t\right)}\varphi\left(t\right)dt\right)^{-k}f_{n}^{2}\right]^{\frac{2}{k+2}}\left\Vert g\right\Vert ^{\frac{2k}{k+2}}.
\end{equation}
We pay attention to the integral on the right-hand side by direct
estimate and computation. From (\ref{eq:12}), we thus get
\begin{eqnarray}
\left\Vert f\right\Vert ^{2} & \le & \left(\frac{D_{2}^{k}}{\left[B_{1}\left(1-e^{-D_{1}T}\right)\right]^{k}}\right)^{\frac{2}{k+2}}\left(\sum_{n=1}^{\infty}n^{2k}f_{n}^{2}\right)^{\frac{2}{k+2}}\left\Vert g\right\Vert ^{\frac{2k}{k+2}}\nonumber \\
 & \le & \left[\frac{D_{2}}{B_{1}\left(1-e^{-D_{1}T}\right)}\right]^{\frac{2k}{k+2}}\left\Vert f\right\Vert _{H^{k}\left(0,\pi\right)}^{\frac{4}{k+2}}\left\Vert g\right\Vert ^{\frac{2k}{k+2}},
\end{eqnarray}
\textcolor{red}{Hence, the theorem} has been proved.
\end{proof}
\section{Tikhonov regularization under an a priori parameter choice rule}
Define a linear operator $K:L^{2}\left(0,\pi\right)\to L^{2}\left(0,\pi\right)$
as follows.
\begin{equation}
Kf\left(x\right)=\sum_{n=1}^{\infty}\left\langle f,X_{n}\right\rangle \int_{0}^{T}e^{n^{2}B\left(t\right)}dtX_{n}\left(x\right)=\int_{0}^{\pi}k\left(x,\xi\right)f\left(\xi\right)d\xi,\label{eq:19}
\end{equation}
where ${\displaystyle k\left(x,\xi\right)=\sum_{n=1}^{\infty}\int_{0}^{T}e^{n^{2}B\left(t\right)}dtX_{n}\left(x\right)X_{n}\left(\xi\right)}$.
Due to $k\left(x,\xi\right)=k\left(\xi,x\right)$, $K$ is self-adjoint.
Next, we prove its compactness.
\textcolor{red}{Let us consider} finite rank operators $K_{m}$ by
\begin{equation}
K_{m}f\left(x\right)=\sum_{n=1}^{m}\left\langle f,X_{n}\right\rangle \int_{0}^{T}e^{n^{2}B\left(t\right)}dtX_{n}\left(x\right).\label{eq:20}
\end{equation}
Then, from (\ref{eq:19}) and (\ref{eq:20}), we have
\begin{equation}
\left\Vert K_{m}f-Kf\right\Vert ^{2}=\sum_{n=m+1}^{\infty}\left(\int_{0}^{T}e^{n^{2}B\left(t\right)}dt\right)^{2}f_{n}^{2}\le\frac{1}{m^{4}D_{1}^{2}}\sum_{n=m+1}^{\infty}f_{n}^{2}\le\frac{1}{m^{4}D_{1}^{2}}\left\Vert f\right\Vert ^{2}.
\end{equation}
Therefore, $\left\Vert K_{m}-K\right\Vert \to0$ in the sense of operator
norm in $\mathcal{L}\left(L^{2}\left(0,\pi\right);L^{2}\left(0,\pi\right)\right)$, as $m\to\infty$. $K$ is also a compact operator.
Next, the singular values for the linear self-adjoint compact operator
are
\begin{equation}
\sigma_{n}=\int_{0}^{T}e^{n^{2}B\left(t\right)}dt,
\end{equation}
and corresponding eigenvectors \textcolor{red}{are} $X_{n}$ known as an orthonormal
basis in $L^{2}\left(0,\pi\right)$.
From (\ref{eq:19}), the inverse source problem introduced above
can be formulated as an operator equation.
\begin{equation}
\left(Kf\right)\left(x\right)=g\left(x\right).
\end{equation}
In general, \textcolor{red}{such a problem is ill-posed}. From the point of view, we
\textcolor{red}{aim at solving} it by using Tikhonov regularization method, \textcolor{red}{i.e. the study of minimizing}
the \textcolor{red}{following} quantity in $L^{2}\left(0,\pi\right)$
\begin{equation}
\left\Vert Kf-g\right\Vert ^{2}+\mu^{2}\left\Vert f\right\Vert ^{2}.
\end{equation}
As shown in \cite{Ki} by Theorem 2.12, its minimizer $f_{\mu}$ satisfies
\begin{equation}
K^{*}Kf_{\mu}\left(x\right)+\mu^{2}f_{\mu}\left(x\right)=K^{*}g\left(x\right).
\end{equation}
Due to singular value decomposition for compact self-adjoint operator,
we have
\begin{equation}
f_{\mu}\left(x\right)=\sum_{n=1}^{\infty}\left(\mu^{2}+\left(\int_{0}^{T}e^{n^{2}B\left(t\right)}\varphi\left(t\right)dt\right)^{2}\right)^{-1}\int_{0}^{T}e^{n^{2}B\left(t\right)}\varphi\left(t\right)dt\left\langle g,X_{n}\right\rangle X_{n}\left(x\right).\label{eq:26}
\end{equation}
If the given data is noised, we can establish 
\begin{equation}
f_{\mu}^{\epsilon}\left(x\right)=\sum_{n=1}^{\infty}\left(\mu^{2}+\left(\int_{0}^{T}e^{n^{2}B\left(t\right)}\varphi_{\epsilon}\left(t\right)dt\right)^{2}\right)^{-1}\int_{0}^{T}e^{n^{2}B\left(t\right)}\varphi_{\epsilon}\left(t\right)dt\left\langle g_{\epsilon},X_{n}\right\rangle X_{n}\left(x\right).\label{eq:27}
\end{equation}
From (\ref{eq:13}), (\ref{eq:26}) and (\ref{eq:27}), we get
\begin{eqnarray}
f_{\mu}\left(x\right) & = & \sum_{n=1}^{\infty}\frac{\Phi\left(n,\varphi\right)}{\mu^{2}+\left(\Phi\left(n,\varphi\right)\right)^{2}}\left\langle g,X_{n}\right\rangle X_{n}\left(x\right),\\
f_{\mu}^{\epsilon}\left(x\right) & = & \sum_{n=1}^{\infty}\frac{\Phi\left(n,\varphi_{\epsilon}\right)}{\mu^{2}+\left(\Phi\left(n,\varphi_{\epsilon}\right)\right)^{2}}\left\langle g_{\epsilon},X_{n}\right\rangle X_{n}\left(x\right).
\end{eqnarray}
\textcolor{red}{In this work,} we will deduce an error estimate for $\left\Vert f-f_{\mu}^{\epsilon}\right\Vert $
and show convergence rate under a suitable choice of regularization parameters.
It is clear that the entire error can be decomposed into the bias
and noise propagation as follows:
\begin{equation}
\left\Vert f-f_{\mu}^{\epsilon}\right\Vert \le\left\Vert f-f_{\mu}\right\Vert +\left\Vert f_{\mu}-f_{\mu}^{\epsilon}\right\Vert .\label{eq:30}
\end{equation}
We first give the error bound for the noise term.
\begin{lemma}
\label{lem:3}If the noise assumption holds and assume that $\left\Vert g-g_{\epsilon}\right\Vert \le\epsilon$
and $\left\Vert \varphi-\varphi_{\epsilon}\right\Vert _{L^{2}\left[0,T\right]}\le\epsilon$,
then the solution depends continuously on the given data. Moreover, we
have the following estimate.
\begin{equation}
\left\Vert f_{\mu}-f_{\mu}^{\epsilon}\right\Vert \le\frac{\left\Vert f\right\Vert \left(\mu^{2}+D_{1}^{-2}B_{2}C_{2}\right)}{4\mu^{2}T^{2}B_{1}C_{1}}\left\Vert \varphi-\varphi_{\epsilon}\right\Vert _{L^{2}\left[0,T\right]}+\frac{1}{2\mu}\left\Vert g-g_{\epsilon}\right\Vert .
\end{equation}
\end{lemma}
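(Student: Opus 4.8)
The plan is to work entirely in the orthonormal basis $\{X_n\}$ and estimate $\|f_\mu-f_\mu^\epsilon\|$ coefficient by coefficient. Subtracting the two series, the $n$-th Fourier coefficient of $f_\mu-f_\mu^\epsilon$ is $\frac{\Phi(n,\varphi)}{\mu^2+\Phi(n,\varphi)^2}\langle g,X_n\rangle-\frac{\Phi(n,\varphi_\epsilon)}{\mu^2+\Phi(n,\varphi_\epsilon)^2}\langle g_\epsilon,X_n\rangle$. I would insert the cross term $\frac{\Phi(n,\varphi_\epsilon)}{\mu^2+\Phi(n,\varphi_\epsilon)^2}\langle g,X_n\rangle$ and use the triangle inequality in $L^2(0,\pi)$ to split the error into a ``data-noise'' part $\sum_n\frac{\Phi(n,\varphi_\epsilon)}{\mu^2+\Phi(n,\varphi_\epsilon)^2}\langle g-g_\epsilon,X_n\rangle X_n$ and a ``coefficient-noise'' part $\sum_n\bigl(\frac{\Phi(n,\varphi)}{\mu^2+\Phi(n,\varphi)^2}-\frac{\Phi(n,\varphi_\epsilon)}{\mu^2+\Phi(n,\varphi_\epsilon)^2}\bigr)\langle g,X_n\rangle X_n$. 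For the data-noise part, Parseval together with the elementary bound $\frac{s}{\mu^2+s^2}\le\frac1{2\mu}$ valid for every $s>0$ produces at once the term $\frac1{2\mu}\|g-g_\epsilon\|$.

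For the coefficient-noise part the starting point is the algebraic identity $\frac{a}{\mu^2+a^2}-\frac{b}{\mu^2+b^2}=\frac{(a-b)(\mu^2-ab)}{(\mu^2+a^2)(\mu^2+b^2)}$ applied with $a=\Phi(n,\varphi)$ and $b=\Phi(n,\varphi_\epsilon)$. I would then: (i) use the relation $\langle g,X_n\rangle=\Phi(n,\varphi)\langle f,X_n\rangle$ coming from (\ref{eq:7})--(\ref{eq:13}) to replace $g$ by $f$, so that after Parseval and pulling the supremum over $n$ outside the sum the factor $\|f\|$ appears; (ii) estimate $|\Phi(n,\varphi)-\Phi(n,\varphi_\epsilon)|=\bigl|\int_0^T e^{n^2B(t)}(\varphi-\varphi_\epsilon)(t)\,dt\bigr|\le\bigl(\int_0^T e^{2n^2B(t)}dt\bigr)^{1/2}\|\varphi-\varphi_\epsilon\|_{L^2[0,T]}$ by Cauchy--Schwarz, noting $e^{2n^2B(t)}\le e^{n^2B(t)}$ because $B(t)=A(t)-A(T)\le0$; (iii) bound the numerator by $|\mu^2-ab|\le\mu^2+ab$ and invoke Lemma 2.1 in the case $k\ge0$ to get $\Phi(n,\varphi)\Phi(n,\varphi_\epsilon)\le D_1^{-2}B_2C_2$, hence $|\mu^2-ab|\le\mu^2+D_1^{-2}B_2C_2$; (iv) bound the denominator from below via the arithmetic--geometric inequality, $\mu^2+a^2\ge2\mu a$ and $\mu^2+b^2\ge2\mu b$, and control the remaining factors $\Phi(n,\varphi),\Phi(n,\varphi_\epsilon)$ from below using (\ref{eq:12}) and Lemma 2.1 in the case $k<0$. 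Assembling (i)--(iv), the $n$-dependent exponential integrals are designed to cancel against each other, leaving the uniform bound $\frac{\|f\|(\mu^2+D_1^{-2}B_2C_2)}{4\mu^2T^2B_1C_1}\|\varphi-\varphi_\epsilon\|_{L^2[0,T]}$; adding the two parts gives the stated estimate, and continuous dependence on the data follows since the right-hand side tends to $0$ as $\epsilon\to0$ for fixed $\mu$.

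The main obstacle is the interaction of steps (ii) and (iv): the Cauchy--Schwarz bound on $|\Phi(n,\varphi)-\Phi(n,\varphi_\epsilon)|$ decays in $n$ like $\bigl(\int_0^T e^{2n^2B(t)}dt\bigr)^{1/2}$, while the lower bounds on $\Phi(n,\varphi)$ and $\Phi(n,\varphi_\epsilon)$ occurring in the denominator also decay in $n$, so one must arrange the elementary inequalities so that these two decays balance exactly and leave a constant independent of $n$ matching the one claimed. This is where the two-sided bounds $D_1\le a\le D_2$, $B_1\le\varphi\le B_2$, $C_1\le\varphi_\epsilon\le C_2$ in (\ref{eq:12}) and the monotonicity $B(t)\le0$ have to be used with care; every other step is routine.
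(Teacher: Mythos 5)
Your plan coincides, step for step, with the paper's own proof: the same insertion of the cross term $\frac{\Phi(n,\varphi_\epsilon)}{\mu^2+\Phi(n,\varphi_\epsilon)^2}\langle g,X_n\rangle$, the same bound $\sup_{s>0}\frac{s}{\mu^2+s^2}=\frac{1}{2\mu}$ giving the term $\frac{1}{2\mu}\|g-g_\epsilon\|$, and, for the coefficient-noise part, the same identity $\frac{a}{\mu^2+a^2}-\frac{b}{\mu^2+b^2}=\frac{(a-b)(\mu^2-ab)}{(\mu^2+a^2)(\mu^2+b^2)}$ followed by $(\mu^2+a^2)(\mu^2+b^2)\ge4\mu^2ab$, the substitution $g_n=\Phi(n,\varphi)f_n$, and the two-sided bounds of Section~2. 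Up to that point there is nothing to object to.

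The problem is the step you yourself flag as ``the main obstacle'': it is not resolved in your write-up, and as you have set it up it does not close. Write $I_n=\int_0^Te^{n^2B(t)}\,dt$. Your Cauchy--Schwarz bound gives $|\Phi(n,\varphi)-\Phi(n,\varphi_\epsilon)|\le\|\varphi-\varphi_\epsilon\|_{L^2[0,T]}\bigl(\int_0^Te^{2n^2B(t)}dt\bigr)^{1/2}\le\|\varphi-\varphi_\epsilon\|_{L^2[0,T]}\,I_n^{1/2}$, which decays only like $I_n^{1/2}$, while the denominator factor surviving after the substitution $g_n=\Phi(n,\varphi)f_n$ is $\Phi(n,\varphi_\epsilon)\ge C_1I_n$, which decays like $I_n\sim n^{-2}$. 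The two decays therefore do not cancel: the $n$-th term retains a factor of order $I_n^{-1/2}\sim n$, so this route controls the sum by $\bigl(\sum_n n^2f_n^2\bigr)^{1/2}$, i.e.\ by $\|f\|_{H^1}$, not by $\|f\|$, and no $n$-independent constant of the announced form $\frac{\mu^2+D_1^{-2}B_2C_2}{4\mu^2T^2B_1C_1}$ emerges. (To be fair, this is exactly where the paper's own Step~1 is loose: it effectively replaces the lower bound $\Phi(n,\varphi)\Phi(n,\varphi_\epsilon)\ge B_1C_1I_n^2$ by $B_1C_1T^2$, although $I_n\le T$, so $T^2$ is an upper, not a lower, bound for $I_n^2$.) To genuinely close the estimate you would need something extra, e.g.\ pointwise control of the perturbation ($\Phi(n,|\varphi-\varphi_\epsilon|)\le\|\varphi-\varphi_\epsilon\|_{L^\infty}I_n$, in which case the $I_n$'s do cancel) or an additional smoothness/a priori assumption on $f$; as it stands, the asserted ``exact balancing'' is an unverified hope, and it is precisely the delicate point of the lemma.
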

\begin{proof}
We notice that
\begin{eqnarray}
f_{\mu}-f_{\mu}^{\epsilon} & = & \sum_{n=1}^{\infty}\left(\frac{\Phi\left(n,\varphi\right)}{\mu^{2}+\left(\Phi\left(n,\varphi\right)\right)^{2}}g_{n}X_{n}-\frac{\Phi\left(n,\varphi_{\epsilon}\right)}{\mu^{2}+\left(\Phi\left(n,\varphi_{\epsilon}\right)\right)^{2}}g_{n}X_{n}\right)\nonumber \\
 &  & +\sum_{n=1}^{\infty}\left(\frac{\Phi\left(n,\varphi_{\epsilon}\right)}{\mu^{2}+\left(\Phi\left(n,\varphi_{\epsilon}\right)\right)^{2}}g_{n}X_{n}-\frac{\Phi\left(n,\varphi_{\epsilon}\right)}{\mu^{2}+\left(\Phi\left(n,\varphi_{\epsilon}\right)\right)^{2}}g_{n}^{\epsilon}X_{n}\right)\\
 & = & A_{1}+A_{2}.\nonumber 
\end{eqnarray}
We consider two following estimates by diving into two steps.\\
\textbf{Step 1.} Estimate $\left\Vert A_{1}\right\Vert $
\begin{eqnarray}
A_{1} & \le & \sum_{n=1}^{\infty}\frac{\mu^{2}\Phi\left(n,\left|\mu-\mu_{\epsilon}\right|\right)+\Phi\left(n,\varphi\right)\Phi\left(n,\varphi_{\epsilon}\right)\Phi\left(n,\left|\varphi-\varphi_{\epsilon}\right|\right)}{\left[\mu^{2}+\left(\Phi\left(n,\varphi\right)\right)^{2}\right]\left[\mu^{2}+\left(\Phi\left(n,\varphi_{\epsilon}\right)\right)^{2}\right]}g_{n}X_{n}\nonumber \\
 & \le & \sum_{n=1}^{\infty}\frac{\left(\mu^{2}+\Phi\left(n,\varphi\right)\Phi\left(n,\varphi_{\epsilon}\right)\right)\Phi\left(n,\left|\varphi-\varphi_{\epsilon}\right|\right)}{4\mu^{2}\Phi\left(n,\varphi\right)\Phi\left(n,\varphi_{\epsilon}\right)}g_{n}X_{n}.\label{eq:33}
\end{eqnarray}
Notice that
\begin{equation}
\Phi\left(n,\varphi\right)\Phi\left(n,\varphi_{\epsilon}\right)\le B_{2}C_{2}\left(\int_{0}^{T}e^{n^{2}B\left(t\right)}dt\right)^{2},\quad\Phi\left(n,\left|\varphi-\varphi_{\epsilon}\right|\right)\le\left\Vert \varphi-\varphi_{\epsilon}\right\Vert _{L^{2}\left[0,T\right]}\left(\int_{0}^{T}e^{2n^{2}B\left(t\right)}dt\right)^{\frac{1}{2}}.
\end{equation}
It follows that
\begin{eqnarray}
\left\Vert A_{1}\right\Vert  & \le & \frac{\left\Vert \varphi-\varphi_{\epsilon}\right\Vert _{L^{2}\left[0,T\right]}\left(\mu^{2}+B_{2}C_{2}\left(\int_{0}^{T}e^{n^{2}B\left(t\right)}dt\right)^{2}\right)}{4\mu^{2}T^{2}B_{1}C_{1}}\left\Vert f\right\Vert \nonumber \\
 & \le & \frac{\left\Vert \varphi-\varphi_{\epsilon}\right\Vert _{L^{2}\left[0,T\right]}\left(\mu^{2}+n^{-4}D_{1}^{-2}B_{2}C_{2}\left(1-e^{-n^{2}D_{2}T}\right)^{2}\right)}{4\mu^{2}T^{2}B_{1}C_{1}}\left\Vert f\right\Vert \nonumber \\
 & \le & \frac{\left\Vert f\right\Vert \left(\mu^{2}+D_{1}^{-2}B_{2}C_{2}\right)}{4\mu^{2}T^{2}B_{1}C_{1}}\left\Vert \varphi-\varphi_{\epsilon}\right\Vert _{L^{2}\left[0,T\right]}.\label{eq:35}
\end{eqnarray}
\textbf{Step 2.} Estimate $\left\Vert A_{2}\right\Vert $\\
\begin{equation}
\left\Vert A_{2}\right\Vert \le\sqrt{\sum_{n=1}^{\infty}\frac{\left(\Phi\left(n,\varphi_{\epsilon}\right)\right)^{2}}{\mu^{2}+\left(\Phi\left(n,\varphi_{\epsilon}\right)\right)^{2}}\left|g_{n}-g_{n}^{\epsilon}\right|^{2}}\le\frac{1}{2\mu}\left\Vert g-g_{\epsilon}\right\Vert .\label{eq:36}
\end{equation}
Combining (\ref{eq:35}) and (\ref{eq:36}), the proof is completed.
\end{proof}
In order to obtain the boundedness of bias, we usually need some a
priori conditions. By Tikhonov's theorem, the \textcolor{red}{operator} $K^{-1}$ \textcolor{red}{is} restricted
to the continuous image of a compact set $M$. Thus, we assume $f$
is in a compact subset of $L^{2}\left(0,\pi\right)$. Hereafter,
we assume that $\left\Vert f\right\Vert _{H^{2k}\left(0,\pi\right)}\le M$
for $k>0$.
\begin{lemma}
\label{lem:4-1}If the a priori bound holds, then
\begin{equation}
\left\Vert f-f_{\mu}\right\Vert \le\begin{cases}
\max\left\{ 1,\frac{D_{2}^{2}}{\left[B_{1}\left(1-e^{-D_{1}T}\right)\right]^{2}}\right\} M\mu^{\frac{k}{2}} & ,0<k\le2,\\
\max\left\{ 1,\frac{D_{2}^{2}}{\left[B_{1}\left(1-e^{-D_{1}T}\right)\right]^{2}}\right\} M\mu & ,k>2.
\end{cases}
\end{equation}
\end{lemma}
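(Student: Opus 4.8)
The plan is to reduce the bias estimate to a one‑dimensional extremal problem for the Tikhonov filter. First I would combine the representation of $f_{\mu}$ in (\ref{eq:26}) with the identity $\left\langle g,X_{n}\right\rangle =\Phi\left(n,\varphi\right)f_{n}$ (which is (\ref{eq:7}) rewritten through (\ref{eq:13})) to express the bias as a damped Fourier series,
\[
f-f_{\mu}=\sum_{n=1}^{\infty}\frac{\mu^{2}}{\mu^{2}+\left(\Phi\left(n,\varphi\right)\right)^{2}}\,f_{n}X_{n}.
\]
Then Parseval's identity, the splitting $f_{n}^{2}=\left(1+n^{2}\right)^{-2k}\left(1+n^{2}\right)^{2k}f_{n}^{2}$, and the a priori bound $\left\Vert f\right\Vert _{H^{2k}\left(0,\pi\right)}\le M$ give
\[
\left\Vert f-f_{\mu}\right\Vert ^{2}\le\left(\sup_{n\ge1}G\left(n\right)\right)^{2}M^{2},\qquad\text{where}\quad G\left(n\right):=\frac{\mu^{2}\left(1+n^{2}\right)^{-k}}{\mu^{2}+\left(\Phi\left(n,\varphi\right)\right)^{2}}.
\]
Hence the lemma reduces to proving that $\sup_{n\ge1}G\left(n\right)\le\max\{1,D_{2}^{2}/[B_{1}(1-e^{-D_{1}T})]^{2}\}\,\mu^{k/2}$ when $0<k\le2$, and the same bound with $\mu$ in place of $\mu^{k/2}$ when $k>2$.

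Next I would insert the conditional‑stability estimate of Lemma 2.1, used with exponent $-1$ and $h=\varphi\in\left[B_{1},B_{2}\right]$, which gives $\Phi\left(n,\varphi\right)\ge B_{1}\left(1-e^{-D_{1}T}\right)D_{2}^{-1}n^{-2}$. Writing $c:=B_{1}\left(1-e^{-D_{1}T}\right)/D_{2}$ and using $\left(1+n^{2}\right)^{-k}\le n^{-2k}$,
\[
G\left(n\right)\le\frac{\mu^{2}n^{-2k}}{\mu^{2}+c^{2}n^{-4}}=\frac{\mu^{2}\left(n^{2}\right)^{2-k}}{\mu^{2}\left(n^{2}\right)^{2}+c^{2}}=:\psi\left(n^{2}\right),
\]
so it remains to bound $\psi\left(t\right)=\mu^{2}t^{2-k}/\left(\mu^{2}t^{2}+c^{2}\right)$ for $t\ge1$. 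When $k\ge2$, $\psi$ is a product of two positive non‑increasing factors on $\left(0,\infty\right)$, hence decreasing on $\left[1,\infty\right)$, so $\psi\le\psi\left(1\right)=\mu^{2}/\left(\mu^{2}+c^{2}\right)\le c^{-2}\mu^{2}$. When $0<k<2$, a short differentiation shows $\psi$ increases on $\left(0,t_{*}\right)$ and decreases on $\left(t_{*},\infty\right)$ with $t_{*}=\left(c/\mu\right)\sqrt{\left(2-k\right)/k}$; thus either $t_{*}\le1$, where again $\psi\le\psi\left(1\right)\le c^{-2}\mu^{2}$, or $t_{*}\ge1$, where a direct evaluation gives $\psi\left(t_{*}\right)=\tfrac{1}{2}\,k^{k/2}\left(2-k\right)^{\left(2-k\right)/2}c^{-k}\mu^{k}$, and since $k^{k/2}\left(2-k\right)^{\left(2-k\right)/2}=2\,p^{p}\left(1-p\right)^{1-p}\le2$ for $p=k/2\in\left(0,1\right)$, one has $\psi\left(t_{*}\right)\le c^{-k}\mu^{k}$.

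Finally I would collect the cases, using that the regularization parameter may be assumed to satisfy $\mu\in\left(0,1\right)$. For $0<k\le2$: since $c^{-k}\le\max\{1,c^{-2}\}$, $\mu^{k}\le\mu^{k/2}$ and $\mu^{2}\le\mu^{k/2}$, both $c^{-k}\mu^{k}$ and $c^{-2}\mu^{2}$ are $\le\max\{1,c^{-2}\}\,\mu^{k/2}$, hence $\sup_{n\ge1}G\left(n\right)\le\max\{1,c^{-2}\}\,\mu^{k/2}$; for $k>2$: $c^{-2}\mu^{2}\le c^{-2}\mu\le\max\{1,c^{-2}\}\,\mu$, hence $\sup_{n\ge1}G\left(n\right)\le\max\{1,c^{-2}\}\,\mu$. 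Substituting into $\left\Vert f-f_{\mu}\right\Vert ^{2}\le\left(\sup_{n\ge1}G\left(n\right)\right)^{2}M^{2}$, recalling $c^{-2}=D_{2}^{2}/[B_{1}(1-e^{-D_{1}T})]^{2}$, and taking square roots yields the two stated estimates. The main obstacle is the extremization of $\psi$ — in particular, carrying the multiplicative constant through the evaluation of $\psi\left(t_{*}\right)$ so that it collapses precisely to $\max\{1,D_{2}^{2}/[B_{1}(1-e^{-D_{1}T})]^{2}\}$ — whereas the case analysis ($k\ge2$ vs.\ $0<k<2$, $t_{*}\le1$ vs.\ $t_{*}\ge1$) and the use of $\mu<1$ are only bookkeeping.
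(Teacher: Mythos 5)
Your proof is correct, and it reaches exactly the stated constant, but it follows a genuinely different route from the paper. The paper first writes the bias as $\sum_n P(n)\,(1+n^2)^{2k}g_n^2/(\Phi(n,\varphi))^2$ with $P(n)=\mu^4(\mu^2+(\Phi(n,\varphi))^2)^{-2}(1+n^2)^{-2k}$ and then splits the spectrum at $n_0\approx\mu^{-1/4}$: on the low frequencies it bounds $(\mu^2+\Phi^2)^{-2}\le\Phi^{-4}$ via Lemma 2.1 and uses $n\le\mu^{-1/4}$ to absorb the powers of $n$, while on the high frequencies it simply uses $(1+n^2)^{-2k}\le\mu^k$; the case analysis in $k$ then gives $P(n)\le\max\{1,c^{-4}\}\mu^{k}$ (resp.\ $\mu^2$). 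You instead bound the supremum of the single filter factor $G(n)=\mu^2(1+n^2)^{-k}/(\mu^2+(\Phi(n,\varphi))^2)$ and solve the resulting one-dimensional extremal problem for $\psi(t)=\mu^2 t^{2-k}/(\mu^2t^2+c^2)$, with the maximizer $t_*$ evaluated explicitly and the constant $k^{k/2}(2-k)^{(2-k)/2}=2p^p(1-p)^{1-p}\le 2$ — this is precisely the $H(y)$-type extremal bound the paper itself invokes only later, in Section 4. Both arguments use the same two ingredients (the lower bound $\Phi(n,\varphi)\ge B_1(1-e^{-D_1T})D_2^{-1}n^{-2}$ from Lemma 2.1, and the smallness $\mu<1$, which the paper uses implicitly through $\mu^{k+2}\le\mu^k$ and $n_0\le\mu^{-1/4}$ while you state it explicitly), and both land on the same constant $\max\{1,D_2^2/[B_1(1-e^{-D_1T})]^2\}$. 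What your route buys is a cleaner argument with no splitting index and a sharper intermediate bound ($\mu^{k}$ rather than $\mu^{k/2}$ before the final relaxation, so the loss to $\mu^{k/2}$ happens only in the last bookkeeping step); what the paper's route buys is that it avoids any calculus of the filter function and works with elementary monotonicity of powers of $n$ on each frequency block. Either way the lemma holds as stated.
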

\begin{proof}
From (\ref{eq:8}) and (\ref{eq:26}), we deduce that
\begin{eqnarray}
\left\Vert f-f_{\mu}\right\Vert ^{2} & \le & \sum_{n=1}^{\infty}\frac{\mu^{4}}{\left(\Phi\left(n,\varphi\right)\right)^{2}\left(\mu^{2}+\left(\Phi\left(n,\varphi\right)\right)^{2}\right)^{2}}g_{n}^{2}\nonumber \\
 & \le & \sum_{n=1}^{\infty}P\left(n\right)\frac{\left(1+n^{2}\right)^{2k}g_{n}^{2}}{\left(\Phi\left(n,\varphi\right)\right)^{2}},\label{eq:38}
\end{eqnarray}
where
\begin{equation}
P\left(n\right)=\frac{\mu^{4}}{\left(\mu^{2}+\left(\Phi\left(n,\varphi\right)\right)^{2}\right)^{2}}\left(1+n^{2}\right)^{-2k}.
\end{equation}
Next, we estimate $P\left(n\right)$. Without
loss of generality, we assume that $\mu^{-\frac{1}{4}}$ is not integer.
Therefore, (\ref{eq:38}) can be divided into the sum of $A_{1}$
and $A_{2}$ as follows:
\begin{equation}
A_{1}=\sum_{n=1}^{n_{0}}P\left(n\right)\frac{\left(1+n^{2}\right)^{2k}g_{n}^{2}}{\left(\Phi\left(n,\varphi\right)\right)^{2}},\quad A_{2}=\sum_{n=n_{0}+1}^{\infty}P\left(n\right)\frac{\left(1+n^{2}\right)^{2k}g_{n}^{2}}{\left(\Phi\left(n,\varphi\right)\right)^{2}},
\end{equation}
where $n_{0}\le\mu^{-\frac{1}{4}}\le n_{0}+1$.
In $A_{1}$, we have
\begin{equation}
P\left(n\right)\le\frac{\mu^{4}n^{8}D_{2}^{4}}{\left[B_{1}\left(1-e^{-D_{1}T}\right)\right]^{4}}\left(1+n^{2}\right)^{-2k}\le\frac{D_{2}^{4}}{\left[B_{1}\left(1-e^{-D_{1}T}\right)\right]^{4}}\mu^{4}n^{8-4k}.
\end{equation}
For $0<k\le2$, we deduce that
\begin{equation}
P\left(n\right)\le\frac{D_{2}^{4}}{\left[B_{1}\left(1-e^{-D_{1}T}\right)\right]^{4}}\mu^{k+2}\le\frac{D_{2}^{4}}{\left[B_{1}\left(1-e^{-D_{1}T}\right)\right]^{4}}\mu^{k}.\label{eq:42}
\end{equation}
For $k>2$, it yields
\begin{equation}
P\left(n\right)\le\frac{D_{2}^{4}}{\left[B_{1}\left(1-e^{-D_{1}T}\right)\right]^{4}}\mu^{4}.\label{eq:43}
\end{equation}
In addition, we observe in $A_{2}$ that
\begin{equation}
P\left(n\right)\le\left(1+n^{2}\right)^{-2k}\le\mu^{k}.\label{eq:44}
\end{equation}
From (\ref{eq:42})-(\ref{eq:44}), we thus obtain
\begin{equation}
P\left(n\right)\le\begin{cases}
\max\left\{ 1,\frac{D_{2}^{4}}{\left[B_{1}\left(1-e^{-D_{1}T}\right)\right]^{4}}\right\} \mu^{k} & ,0<k\le2,\\
\max\left\{ 1,\frac{D_{2}^{4}}{\left[B_{1}\left(1-e^{-D_{1}T}\right)\right]^{4}}\right\} \mu^{2} & ,k>2.
\end{cases}
\end{equation}
Hence, by using the assumption, we conclude that
\begin{equation}
\left\Vert f-f_{\mu}\right\Vert \le\begin{cases}
\max\left\{ 1,\frac{D_{2}^{2}}{\left[B_{1}\left(1-e^{-D_{1}T}\right)\right]^{2}}\right\} M\mu^{\frac{k}{2}} & ,0<k\le2,\\
\max\left\{ 1,\frac{D_{2}^{2}}{\left[B_{1}\left(1-e^{-D_{1}T}\right)\right]^{2}}\right\} M\mu & ,k>2.
\end{cases}
\end{equation}
\end{proof}
\begin{theorem}
Assume that the a priori condition and the noise assumption hold,
the following estimates are obtained.
\begin{description}
\item [{a.}] If $0<k\le2$ and choose $\mu=\left(\dfrac{\epsilon}{M}\right)^{\frac{1}{k+2}}$,
then
\end{description}
\begin{equation}
\left\Vert f-f_{\mu}^{\epsilon}\right\Vert \le P\left[\left\Vert f\right\Vert \left(\epsilon^{\frac{2}{k+2}}+D_{1}^{-2}B_{2}C_{2}M^{\frac{2}{k+2}}\right)\epsilon^{\frac{k}{k+2}}+\epsilon^{\frac{k+1}{k+2}}+\epsilon^{\frac{1}{k+2}}\right],
\end{equation}
where $P$ is constant and depends on constants $T,B_{1},C_{1},D_{1},D_{2}$, and  $M$ (shown in Sec. 2). 
\begin{description}
\item [{b.}] If $k>2$ and choose $\mu=\left(\dfrac{\epsilon}{M}\right)^{\frac{1}{2}}$,
then
\end{description}
\begin{equation}
\left\Vert f-f_{\mu}^{\epsilon}\right\Vert \le Q\epsilon\left[\left\Vert f\right\Vert \left(1+\frac{D_{1}^{-2}B_{2}C_{2}}{M}\epsilon\right)+1\right],
\end{equation}
where $Q$ is constant and depends on $T,B_{1},C_{1},D_{1},D_{2},M$. 
\end{theorem}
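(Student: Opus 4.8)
The plan is to run the bias--noise splitting (\ref{eq:30}),
\[
\left\Vert f-f_{\mu}^{\epsilon}\right\Vert \le\left\Vert f-f_{\mu}\right\Vert +\left\Vert f_{\mu}-f_{\mu}^{\epsilon}\right\Vert ,
\]
and then feed in the two estimates already prepared: Lemma~\ref{lem:4-1} controls the bias $\left\Vert f-f_{\mu}\right\Vert$ from the a priori bound, while Lemma~\ref{lem:3}, applied with the noise assumptions $\left\Vert g-g_{\epsilon}\right\Vert\le\epsilon$ and $\left\Vert\varphi-\varphi_{\epsilon}\right\Vert_{L^{2}[0,T]}\le\epsilon$, controls the propagated error $\left\Vert f_{\mu}-f_{\mu}^{\epsilon}\right\Vert$. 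Since the hypotheses of the theorem are exactly the hypotheses of those two lemmas, nothing new has to be proved about the operator $K$; the whole proof is the substitution of the prescribed $\mu$ followed by algebra.

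For part (a) I would first record, from Lemma~\ref{lem:4-1} in the range $0<k\le2$, that $\left\Vert f-f_{\mu}\right\Vert\le c\,M\mu^{k/2}$, and from Lemma~\ref{lem:3} that
\[
\left\Vert f_{\mu}-f_{\mu}^{\epsilon}\right\Vert \le\frac{\left\Vert f\right\Vert\left(\mu^{2}+D_{1}^{-2}B_{2}C_{2}\right)}{4\mu^{2}T^{2}B_{1}C_{1}}\,\epsilon+\frac{1}{2\mu}\,\epsilon ,
\]
where $c$ and the remaining prefactors depend only on $T,B_{1},C_{1},D_{1},D_{2}$. Then I would set $\mu=(\epsilon/M)^{1/(k+2)}$ and simplify: the cross term $\mu^{2}\cdot\epsilon/\mu^{2}$ collapses to $\epsilon=\epsilon^{2/(k+2)}\epsilon^{k/(k+2)}$, the term $\epsilon/\mu^{2}$ becomes a multiple of $M^{2/(k+2)}\epsilon^{k/(k+2)}$, the term $\epsilon/\mu$ becomes a multiple of $\epsilon^{(k+1)/(k+2)}$, and the bias $M\mu^{k/2}$ supplies the remaining power of $\epsilon$ times a power of $M$; gathering every numerical factor and every leftover power of $M$ into one constant $P=P(T,B_{1},C_{1},D_{1},D_{2},M)$ yields the asserted bound. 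For part (b) the steps are the same, but now one uses the other branch of Lemma~\ref{lem:4-1}, $\left\Vert f-f_{\mu}\right\Vert\le c\,M\mu$, and chooses $\mu=(\epsilon/M)^{1/2}$, so that $\mu^{2}=\epsilon/M$; then $\epsilon/\mu^{2}$, $\epsilon/\mu$ and $M\mu$ are all $O(\epsilon)$ up to powers of $M$ and the structural constants, which get absorbed into $Q$.

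I expect the only real work to be this last bookkeeping: lining up the powers of $\epsilon$ and $M$ so that the mixed contribution $\mu^{2}\epsilon/\mu^{2}$ is recognised as $\epsilon^{2/(k+2)}\epsilon^{k/(k+2)}$, and so that each summand in the stated inequality is traced back to the correct term of Lemma~\ref{lem:3} or Lemma~\ref{lem:4-1}. One also has to confirm that with these choices of $\mu$ both $\left\Vert f-f_{\mu}\right\Vert\to0$ and $\left\Vert f_{\mu}-f_{\mu}^{\epsilon}\right\Vert\to0$ as $\epsilon\to0$, which is immediate because $\mu\to0$ while $\epsilon/\mu\to0$. There is no genuine analytic obstacle, since all the estimation has already been carried out inside the two lemmas; if one wishes to remove the factor $\left\Vert f\right\Vert$ from the bounds, it can be replaced by the conditional stability estimate of Section~2.
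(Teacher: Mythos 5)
Your proposal follows exactly the paper's own proof: the triangle-inequality splitting (\ref{eq:30}), Lemma \ref{lem:3} for the noise term, Lemma \ref{lem:4-1} for the bias, and then substitution of $\mu=(\epsilon/M)^{1/(k+2)}$ (resp. $(\epsilon/M)^{1/2}$) with the same power-of-$\epsilon$ bookkeeping. The only loose point, namely that the bias term $M\mu^{k/2}$ actually produces $\epsilon^{k/(2(k+2))}$ up to a power of $M$ rather than literally $\epsilon^{1/(k+2)}$ for $k<2$, is glossed over in the paper as well, so your treatment is on par with the published argument.
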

\begin{proof}
From Lemma \ref{lem:3} and Lemma \ref{lem:4-1}, we can obtain the
proof easily. Indeed, for $0<k\le2$, using $\mu=\left(\dfrac{\epsilon}{M}\right)^{\frac{1}{k+2}}$
we have
\begin{eqnarray}
\left\Vert f-f_{\mu}^{\epsilon}\right\Vert  & \le & \frac{\left\Vert f\right\Vert }{4T^{2}B_{1}C_{1}}\left(1+\frac{D_{1}^{-2}B_{2}C_{2}}{\mu^{2}}\right)\left\Vert \varphi-\varphi_{\epsilon}\right\Vert _{L^{2}\left[0,T\right]}+\frac{1}{2\mu}\left\Vert g-g_{\epsilon}\right\Vert \nonumber \\
 &  & +\max\left\{ 1,\frac{D_{2}^{2}}{\left[B_{1}\left(1-e^{-D_{1}T}\right)\right]^{2}}\right\} M\mu^{\frac{k}{2}}\nonumber \\
 & \le & \frac{\left\Vert f\right\Vert }{4T^{2}B_{1}C_{1}}\left(\epsilon^{\frac{2}{k+2}}+D_{1}^{-2}B_{2}C_{2}M^{\frac{2}{k+2}}\right)\epsilon^{\frac{k}{k+2}}+\frac{1}{2}M^{\frac{1}{k+2}}\epsilon^{\frac{k+1}{k+2}}\nonumber \\
 &  & +\max\left\{ 1,\frac{D_{2}^{2}}{\left[B_{1}\left(1-e^{-D_{1}T}\right)\right]^{2}}\right\} M\epsilon^{\frac{1}{k+2}}.
\end{eqnarray}
Besides, for $k>2$, choosing $\mu=\left(\dfrac{\epsilon}{M}\right)^{\frac{1}{2}}$
will lead to the following estimate. 
\begin{eqnarray}
\left\Vert f-f_{\mu}^{\epsilon}\right\Vert  & \le & \frac{\left\Vert f\right\Vert }{4T^{2}B_{1}C_{1}}\left(1+\frac{D_{1}^{-2}B_{2}C_{2}}{M}\epsilon\right)\epsilon+\frac{1}{2}M^{\frac{1}{2}}\epsilon^{\frac{1}{2}}\nonumber \\
 &  & +\max\left\{ 1,\frac{D_{2}^{2}}{\left[B_{1}\left(1-e^{-D_{1}T}\right)\right]^{2}}\right\} M^{\frac{1}{2}}\epsilon^{\frac{1}{2}}.
\end{eqnarray}
\end{proof}

\section{Tikhonov regularization under a posteriori parameter choice rule}

In this section, we consider an a posteriori regularization
parameter choice in Morozov's discrepancy principle (see in \cite{Sc,CP}). First, we introduce \textcolor{red}{the} following lemma:
\begin{lemma}
\label{lem:4}Set $\rho\left(\mu\right)=\left\Vert Kf_{\mu}^{\epsilon}-g_{\epsilon}\right\Vert $
and assume that $0<\epsilon<\left\Vert g_{\epsilon}\right\Vert $,
then the following results hold:
\begin{description}
\item [{a.}] $\rho\left(\mu\right)$ is a continuous function.
\item [{b.}] $\rho\left(\mu\right)\to0$ as $\mu\to0$.
\item [{c.}] $\rho\left(\mu\right)\to\left\Vert g_{\epsilon}\right\Vert $
as $\mu\to\infty$.
\item [{d.}] $\rho\left(\mu\right)$ is a strictly increasing function.
\end{description}
\end{lemma}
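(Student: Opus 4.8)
The plan is to reduce all four assertions to the analysis of a single scalar series obtained from the singular value decomposition. Write $\Phi_n:=\Phi(n,\varphi_\epsilon)=\int_0^T e^{n^2B(t)}\varphi_\epsilon(t)\,dt$ and $g_{\epsilon,n}:=\langle g_\epsilon,X_n\rangle$; note first that $\Phi_n>0$ for every $n$ because $\varphi_\epsilon\ge C_1>0$ by (\ref{eq:12}). Using the closed form (\ref{eq:27}) of $f_\mu^\epsilon$ together with the singular system $\{X_n\}$ (so that the relevant singular values associated with the noisy data are the $\Phi_n$), one computes the Tikhonov residual $Kf_\mu^\epsilon-g_\epsilon=-\sum_{n=1}^\infty\frac{\mu^2}{\mu^2+\Phi_n^2}\,g_{\epsilon,n}X_n$, so by Parseval's identity
\[
\rho(\mu)^2=\sum_{n=1}^\infty\frac{\mu^4}{\left(\mu^2+\Phi_n^2\right)^2}\,g_{\epsilon,n}^2 .
\]
Everything then follows from a termwise analysis of this series, the key bookkeeping fact being that each summand $h_n(\mu):=\frac{\mu^4}{(\mu^2+\Phi_n^2)^2}g_{\epsilon,n}^2$ obeys $0\le h_n(\mu)\le g_{\epsilon,n}^2$ with $\sum_n g_{\epsilon,n}^2=\|g_\epsilon\|^2<\infty$, and that $\|g_\epsilon\|>\epsilon>0$ forces $g_{\epsilon,n}\neq0$ for at least one index $n$.

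For (a), each $h_n$ is continuous on $[0,\infty)$ and is dominated by the summable constant $g_{\epsilon,n}^2$, so the Weierstrass $M$-test gives uniform convergence of $\sum_n h_n$ on $[0,\infty)$; hence $\mu\mapsto\rho(\mu)^2$ is continuous, and therefore so is $\rho=\sqrt{\rho^2}$. For (b) and (c) I would invoke dominated convergence for series (domination by $g_{\epsilon,n}^2$): as $\mu\to0^+$ one has $h_n(\mu)\to0$ for each fixed $n$ — this is exactly where positivity of $\Phi_n$ is used — so $\rho(\mu)^2\to0$; as $\mu\to\infty$ one has $h_n(\mu)\uparrow g_{\epsilon,n}^2$, so $\rho(\mu)^2\to\sum_n g_{\epsilon,n}^2=\|g_\epsilon\|^2$. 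Taking square roots yields (b) and (c).

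For (d), a direct differentiation gives $h_n'(\mu)=\frac{4\mu^3\Phi_n^2}{(\mu^2+\Phi_n^2)^3}\,g_{\epsilon,n}^2\ge0$ on $(0,\infty)$, strictly positive there whenever $g_{\epsilon,n}\neq0$ (again using $\Phi_n>0$). Since the differentiated series is itself dominated uniformly on compact subsets of $(0,\infty)$, one may differentiate $\rho(\mu)^2$ term by term, and $\frac{d}{d\mu}\rho(\mu)^2=\sum_n h_n'(\mu)>0$ because at least one term is strictly positive; hence $\rho^2$, and therefore $\rho$, is strictly increasing. If one prefers to avoid differentiation, the same conclusion is immediate from termwise monotonicity: for $0<\mu_1<\mu_2$ one has $h_n(\mu_1)\le h_n(\mu_2)$ for all $n$, with strict inequality for the index where $g_{\epsilon,n}\neq0$, so $\rho(\mu_1)<\rho(\mu_2)$.

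The computations here are elementary once the representation of $\rho(\mu)^2$ is written down; the only point that genuinely needs care — and thus the main obstacle — is justifying the interchange of the limits $\mu\to0$, $\mu\to\infty$, and of differentiation, with the infinite sum. All of these are handled uniformly by the single domination $h_n(\mu)\le g_{\epsilon,n}^2$ together with $\sum_n g_{\epsilon,n}^2<\infty$, so no real difficulty arises beyond setting up that bound.
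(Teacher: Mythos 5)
Your proof is correct and follows essentially the same route as the paper: the paper's entire proof consists of displaying the series representation $\rho(\mu)^2=\sum_{n=1}^\infty\bigl(\tfrac{\mu^2}{\mu^2+\Phi_n^2}\bigr)^2(g_n^\epsilon)^2$ and asserting that all four claims follow, while you supply the omitted justifications (domination by $g_{\epsilon,n}^2$, termwise limits and strict monotonicity, and the use of $\|g_\epsilon\|>\epsilon$ to get a nonzero coefficient). Incidentally, your use of $\Phi(n,\varphi_\epsilon)$ is the consistent choice given the definition of $f_\mu^\epsilon$ in (\ref{eq:27}); the paper's displayed formula writes $\Phi(n,\varphi)$, which appears to be a typo.
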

\begin{proof}
All results are derived from
\begin{equation}
\rho\left(\mu\right)=\sqrt{\sum_{n=1}^{\infty}\left(\frac{\mu^{2}}{\mu^{2}+\left(\Phi\left(n,\varphi\right)\right)^{2}}\right)^{2}\left(g_{n}^{\epsilon}\right)^{2}}.
\end{equation}
\end{proof}
Let us define a function $H\left(y\right)$ as follows.
\begin{equation}
H\left(y\right)=\begin{cases}
y^{y}\left(1-y\right)^{1-y} & ,y\in\left(0,1\right),\\
1 & ,y=\left\{ 0;1\right\} .
\end{cases}
\end{equation}
It is clear that $0<H\left(y\right)\le1$ since we have
\begin{equation}
\sup_{x>0}\frac{x^{y}}{1+x}=H\left(y\right),\quad y\in\left[0,1\right].
\end{equation}
\begin{lemma}
Choose $\tau>1$ such that $0<\tau\epsilon<\left\Vert g_{\epsilon}\right\Vert $,
then there exists a unique regularization parameter $\mu>0$ such
that $\left\Vert Kf_{\mu}^{\epsilon}-g_{\epsilon}\right\Vert =\tau\epsilon$.
Moreover, if the a priori condition with $k\in\left(0,1\right]$ and
the noise assumptions hold, we have the following inequality
\begin{equation}
\frac{\epsilon}{\mu^{k+1}}\le\frac{P}{\tau-1}H\left(\frac{1-k}{2}\right)M,
\end{equation}
where $P$ is constant, and depends on the constants $k,T,B_{2},C_{1},D_{1},D_{2}$.\end{lemma}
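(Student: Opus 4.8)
The plan is to split the claim into its two parts: existence/uniqueness of the Morozov parameter, and the quantitative bound on $\epsilon/\mu^{k+1}$. For the first part I would invoke Lemma \ref{lem:4}: since $\rho(\mu)=\|Kf_\mu^\epsilon-g_\epsilon\|$ is continuous (part a), tends to $0$ as $\mu\to0$ (part b), tends to $\|g_\epsilon\|$ as $\mu\to\infty$ (part c), and is strictly increasing (part d), and since $0<\tau\epsilon<\|g_\epsilon\|$ by hypothesis, the intermediate value theorem gives a point $\mu>0$ with $\rho(\mu)=\tau\epsilon$, and strict monotonicity makes it unique.

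For the quantitative estimate I would start from the discrepancy identity $\tau\epsilon=\rho(\mu)$ written in spectral form,
\[
\tau\epsilon=\sqrt{\sum_{n=1}^{\infty}\left(\frac{\mu^{2}}{\mu^{2}+(\Phi(n,\varphi))^{2}}\right)^{2}(g_{n}^{\epsilon})^{2}}.
\]
I would then pass to the exact data: writing $g_n^\epsilon=(g_n^\epsilon-g_n)+g_n$, using the triangle inequality in $\ell^2$ and the bound $\|g-g_\epsilon\|\le\epsilon$ together with $\mu^2/(\mu^2+(\Phi(n,\varphi))^2)\le1$, I get
\[
\tau\epsilon\le\epsilon+\sqrt{\sum_{n=1}^{\infty}\left(\frac{\mu^{2}}{\mu^{2}+(\Phi(n,\varphi))^{2}}\right)^{2}g_{n}^{2}},
\]
hence $(\tau-1)\epsilon\le\left(\sum_n \big(\tfrac{\mu^2}{\mu^2+(\Phi(n,\varphi))^2}\big)^2 g_n^2\right)^{1/2}$. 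Next I would substitute $g_n$ via the forward relation (\ref{eq:7}), namely $g_n=\Phi(n,\varphi)f_n$ (up to the normalization constant in $g_n$), so that the summand becomes $\big(\tfrac{\mu^2\,\Phi(n,\varphi)}{\mu^2+(\Phi(n,\varphi))^2}\big)^2 f_n^2$; I then split off a factor $(1+n^2)^{k}$ to reconstruct the $H^k$-norm and are left with controlling the multiplier
\[
\frac{\mu^{2}\,\Phi(n,\varphi)}{\big(\mu^{2}+(\Phi(n,\varphi))^{2}\big)(1+n^{2})^{k/2}}.
\]
Using Lemma 2.1 with $h=\varphi$, i.e. $\Phi(n,\varphi)\le B_2 n^{-2}D_1^{-1}(1-e^{-n^2D_2T})\le \text{(const)}\,(1+n^2)^{-1}$, and setting $x=(\Phi(n,\varphi)/\mu)^{2}$ (so that the first two factors read $\mu\,\Phi(n,\varphi)/(\mu^2+(\Phi(n,\varphi))^2)=\tfrac{1}{2\mu}\cdot\tfrac{2x^{1/2}}{1+x}\cdot\mu\le\ldots$), I would bring in the sup bound $\sup_{x>0}x^{y}/(1+x)=H(y)$ stated just before the lemma, with the exponent chosen as $y=(1-k)/2\in[0,1)$ for $k\in(0,1]$; this produces the factor $\mu^{k+1}$ in the denominator and the factor $H\big(\tfrac{1-k}{2}\big)$. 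Combining, $(\tau-1)\epsilon\le P\,H\big(\tfrac{1-k}{2}\big)M\,\mu^{k+1}$, which rearranges to the asserted inequality.

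The main obstacle I expect is the bookkeeping in the last step: one must choose the right substitution so that exactly the combination $x^{(1-k)/2}/(1+x)$ appears, and simultaneously extract a clean power $\mu^{k+1}$ rather than a mixture of powers; the interplay between the $\mu^2/(\mu^2+\Phi^2)$ factor (which by itself only gives $H$ of a different argument) and the extra $\Phi/(1+n^2)^{k/2}$ factor from the a priori bound has to be balanced carefully, and the constants $B_2,C_1,D_1,D_2,T$ have to be tracked through Lemma 2.1 so that the final $P$ depends only on the claimed parameters. Everything else is routine: continuity and monotonicity come for free from Lemma \ref{lem:4}, and the $\ell^2$ triangle inequality handles the noise splitting.
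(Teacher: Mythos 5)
Your skeleton matches the paper's proof: existence and uniqueness of $\mu$ from Lemma \ref{lem:4} plus the intermediate value theorem, then the triangle inequality on the discrepancy identity, the substitution $g_n=\Phi\left(n,\varphi\right)f_n$, a Sobolev-weight split, and the bound $\sup_{x>0}x^{\frac{1-k}{2}}/\left(1+x\right)=H\left(\frac{1-k}{2}\right)$ combined with Lemma 2.1.

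The gap sits exactly in the bookkeeping step you flagged, and it is not cosmetic. You split off $\left(1+n^2\right)^k f_n^2$, i.e.\ you invoke $\left\Vert f\right\Vert _{H^{k}}\le M$, which leaves the multiplier $T\left(n\right)=\mu^{2}\Phi\left(n,\varphi\right)\big/\big[\big(\mu^{2}+\Phi\left(n,\varphi\right)^{2}\big)\left(1+n^{2}\right)^{k/2}\big]$. But the a priori condition in force here (stated before Lemma 3.2 and used in the paper's proof) is $\left\Vert f\right\Vert _{H^{2k}}\le M$, and the claimed power $\mu^{k+1}$ cannot be extracted from your $T\left(n\right)$: with $x=\left(\mu/\Phi\right)^{2}$ one gets $\frac{\mu^{2}\Phi}{\mu^{2}+\Phi^{2}}\le H\left(\frac{1-k}{2}\right)\mu^{k+1}\Phi^{-k}$, and since $\Phi\left(n,\varphi\right)\asymp n^{-2}$ by Lemma 2.1, the leftover factor $\Phi^{-k}\left(1+n^{2}\right)^{-k/2}\asymp n^{k}$ is unbounded in $n$ for every $k\in\left(0,1\right]$. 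In fact $\sup_{n}T\left(n\right)\asymp\mu^{1+k/2}$ (take $n$ with $\Phi\left(n,\varphi\right)\asymp\mu$), so your route only yields $\left(\tau-1\right)\epsilon\lesssim\mu^{1+k/2}M$, a bound on $\epsilon/\mu^{1+k/2}$, which is strictly weaker than the asserted bound on $\epsilon/\mu^{k+1}$. The repair is the paper's choice of weight: keep $\left(1+n^{2}\right)^{2k}g_{n}^{2}/\Phi\left(n,\varphi\right)^{2}=\left(1+n^{2}\right)^{2k}f_{n}^{2}$ (the $H^{2k}$ norm) and estimate $K\left(n\right)=\mu^{2}\Phi\left(n,\varphi\right)\big/\big[\big(\mu^{2}+\Phi\left(n,\varphi_{\epsilon}\right)^{2}\big)\left(1+n^{2}\right)^{k}\big]$; then after extracting $H\left(\frac{1-k}{2}\right)\mu^{k+1}$ the remaining factor is of order $\Phi^{-k}\left(1+n^{2}\right)^{-k}\le B_{1}^{-k}D_{2}^{k}\left(1-e^{-D_{1}T}\right)^{-k}\left(\frac{n^{2}}{1+n^{2}}\right)^{k}$, which is bounded uniformly in $n$, giving the stated inequality with $P$ collecting $k,T,B_{2},C_{1},D_{1},D_{2}$ (the paper keeps $\Phi\left(n,\varphi_{\epsilon}\right)$ in the denominator since $f_{\mu}^{\epsilon}$ is built from $\varphi_{\epsilon}$; by (\ref{eq:12}) this only changes constants). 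With that single correction your argument coincides with the paper's.
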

\begin{proof}
The uniqueness of regularization parameter $\mu>0$ is derived from
Lemma \ref{lem:4}. We thus only need to prove the inequality.
First, we notice that
\begin{eqnarray}
\tau\epsilon & = & \sqrt{\sum_{n=1}^{\infty}\left(\frac{\mu^{2}}{\mu^{2}+\left(\Phi\left(n,\varphi_{\epsilon}\right)\right)^{2}}\right)^{2}\left(g_{n}^{\epsilon}\right)^{2}}\nonumber \\
 & \le & \sqrt{\sum_{n=1}^{\infty}\left(\frac{\mu^{2}}{\mu^{2}+\left(\Phi\left(n,\varphi_{\epsilon}\right)\right)^{2}}\right)^{2}\left(g_{n}-g_{n}^{\epsilon}\right)^{2}}\nonumber \\
 &  & +\sqrt{\sum_{n=1}^{\infty}\left(\frac{\mu^{2}\Phi\left(n,\varphi\right)}{\left(\mu^{2}+\left(\Phi\left(n,\varphi_{\epsilon}\right)\right)^{2}\right)\left(1+n^{2}\right)^{k}}\right)^{2}\frac{\left(1+n^{2}\right)^{2k}\left(g_{n}\right)^{2}}{\left(\Phi\left(n,\varphi\right)\right)^{2}}}.
\end{eqnarray}
Due to ${\displaystyle \frac{\mu^{2}}{\mu^{2}+\left(\Phi\left(n,\varphi_{\epsilon}\right)\right)^{2}}\le1}$
and setting
\begin{equation}
K\left(n\right)=\frac{\mu^{2}\Phi\left(n,\varphi\right)}{\left(\mu^{2}+\left(\Phi\left(n,\varphi_{\epsilon}\right)\right)^{2}\right)\left(1+n^{2}\right)^{k}},
\end{equation}
we then have
\begin{equation}
\tau\epsilon\le\epsilon+\sqrt{\sum_{n=1}^{\infty}K^{2}\left(n\right)\frac{\left(1+n^{2}\right)^{2k}\left(g_{n}\right)^{2}}{\left(\Phi\left(n,\varphi\right)\right)^{2}}}.\label{eq:57}
\end{equation}
Now, we estimate $K\left(n\right)$ as follows.
\begin{eqnarray}
K\left(n\right) & \le & \frac{\left(\frac{\mu}{\Phi\left(n,\varphi_{\epsilon}\right)}\right)^{1-k}\Phi\left(n,\varphi\right)\left(\Phi\left(n,\varphi_{\epsilon}\right)\right)^{k-3}}{\left(\left(\frac{\mu}{\Phi\left(n,\varphi_{\epsilon}\right)}\right)^{2}+1\right)\left(1+n^{2}\right)^{k}}\mu^{k+1}\nonumber \\
 & \le & H\left(\frac{1-k}{2}\right)\mu^{k+1}B_{2}C_{1}D_{1}^{-1}D_{2}^{3-k}\frac{\left(1-e^{-D_{1}T}\right)^{k-3}}{n^{2\left(k-2\right)}\left(1+n^{2}\right)^{k}}\nonumber \\
 & \le & H\left(\frac{1-k}{2}\right)\mu^{k+1}B_{2}C_{1}D_{1}^{-1}D_{2}^{3-k}\left(1-e^{-D_{1}T}\right)^{k-3}.\label{eq:58}
\end{eqnarray}
Therefore, combining (\ref{eq:57}) and (\ref{eq:58}), we conclude
that
\begin{equation}
\tau\epsilon\le\epsilon+B_{2}C_{1}D_{1}^{-1}D_{2}^{3-k}\left(1-e^{-D_{1}T}\right)^{k-3}H\left(\frac{1-k}{2}\right)\mu^{k+1}M,\label{eq:59}
\end{equation}
which gives the desired result.\end{proof}
\begin{theorem}
\label{thm:8}Assume the a priori condition and the noise assumptions
hold, and there exists $\tau>1$ such that $0<\tau\epsilon<\left\Vert g_{\epsilon}\right\Vert $.
Then, we choose a unique regularization parameter $\mu>0$ such that
\begin{equation}
\left\Vert f-f_{\mu}^{\epsilon}\right\Vert \le\begin{cases}
\epsilon^{\frac{k}{k+1}}P & ,0<k\le1,\\
\epsilon^{\frac{1}{2}}Q & ,k>1,
\end{cases}
\end{equation}
where constants $P$ and $Q$ depend on the constants $T,\mu,k,\tau,B_{1},B_{2},C_{1},C_{2},D_{1},D_{2}$
and $M$.\end{theorem}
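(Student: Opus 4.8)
The plan is to run the standard a posteriori (Morozov) argument. Write $K$ for the forward map $f\mapsto g$, whose singular numbers are $\Phi(n,\varphi)$, and $K_\epsilon$ for the operator actually used to build $f_\mu^\epsilon$, whose singular numbers are $\Phi(n,\varphi_\epsilon)$, so that the discrepancy principle of the preceding lemma reads $\|K_\epsilon f_\mu^\epsilon-g_\epsilon\|=\tau\epsilon$; a routine computation using $\Phi(n,\varphi-\varphi_\epsilon)\le\|\varphi-\varphi_\epsilon\|_{L^2[0,T]}\big(\int_0^Te^{2n^2B(t)}dt\big)^{1/2}$ and $\int_0^Te^{2n^2B(t)}dt\le(2D_1)^{-1}$ shows $\|K-K_\epsilon\|\le c_0\epsilon$ with $c_0=c_0(D_1)$. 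From the choice of $\mu$ I will extract two quantitative facts — an $L^2$ bound on $f_\mu^\epsilon$ and a defect bound on $K(f_\mu^\epsilon-f)$ — and then interpolate between them using the a priori smoothness $\|f\|_{H^{2k}(0,\pi)}\le M$.

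First I would use that $f_\mu^\epsilon$ minimises $v\mapsto\|K_\epsilon v-g_\epsilon\|^2+\mu^2\|v\|^2$. Testing at $f$ and using $Kf=g$, $\|g-g_\epsilon\|\le\epsilon$, $\|K-K_\epsilon\|\le c_0\epsilon$ and $\|K_\epsilon f_\mu^\epsilon-g_\epsilon\|=\tau\epsilon$ gives $\mu^2\|f_\mu^\epsilon\|^2\le\mu^2\|f\|^2+\big((1+c_0\|f\|)^2-\tau^2\big)\epsilon^2$; since the preceding lemma (valid for $k\le1$) forces $\mu\ge c_1\big((\tau-1)\epsilon/M\big)^{1/(k+1)}$, the $\epsilon^2/\mu^2$ slack is of order $\epsilon^{2k/(k+1)}$, whence $\|f_\mu^\epsilon\|^2\le\|f\|^2+O(\epsilon^{2k/(k+1)})$. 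In the same way $\|K(f_\mu^\epsilon-f)\|\le\|K_\epsilon f_\mu^\epsilon-g_\epsilon\|+\|g-g_\epsilon\|+\|(K-K_\epsilon)f_\mu^\epsilon\|\le(\tau+1+c_0\|f\|)\epsilon=:c_2\epsilon$.

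For $0<k\le1$, the two-sided bound $\Phi(n,\varphi)\asymp n^{-2}$ from Lemma 2.1 recasts $\|f\|_{H^{2k}}\le M$ as a source representation $f=(K^{*}K)^{k/2}w$ with $\|w\|\le c_3M$, $c_3=\big(D_2/[B_1(1-e^{-D_1T})]\big)^{k}$. Since $k/2\le1/2$, the interpolation (moment) inequality yields $\|(K^{*}K)^{k/2}h\|\le\|h\|^{1-k}\|Kh\|^{k}$; applying it to $h=f-f_\mu^\epsilon$ inside the elementary estimate $\|f-f_\mu^\epsilon\|^2\le2\langle f,f-f_\mu^\epsilon\rangle+O(\epsilon^{2k/(k+1)})$ (itself a consequence of $\|f_\mu^\epsilon\|^2\le\|f\|^2+O(\cdot)$) and bounding $\langle f,f-f_\mu^\epsilon\rangle=\langle w,(K^{*}K)^{k/2}(f-f_\mu^\epsilon)\rangle\le c_3M\,\|f-f_\mu^\epsilon\|^{1-k}(c_2\epsilon)^{k}$ gives $\|f-f_\mu^\epsilon\|^{1+k}\lesssim M\epsilon^{k}$, i.e. $\|f-f_\mu^\epsilon\|\le P\epsilon^{k/(k+1)}$ with $P$ collecting $M^{1/(k+1)}$, $c_2$, $\tau$ and the Lemma 2.1 constants. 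For $k>1$ the exponent $k/2$ exceeds the Tikhonov saturation level $1/2$, so no rate faster than $\epsilon^{1/2}$ is attainable; here I would simply run the argument with $k$ replaced by $1$ — equivalently replace $H(\tfrac{1-k}{2})$ by $H(0)=1$ and $\tfrac1{k+1}$ by $\tfrac12$ in the lower bound for $\mu$ — obtaining $\|f-f_\mu^\epsilon\|\le Q\epsilon^{1/2}$.

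An equivalent bookkeeping, closer to Section 3, is to split $\|f-f_\mu^\epsilon\|\le\|f-f_\mu\|+\|f_\mu-f_\mu^\epsilon\|$, bound the bias by Lemma \ref{lem:4-1} and the propagated noise by Lemma \ref{lem:3} with $\|g-g_\epsilon\|,\|\varphi-\varphi_\epsilon\|_{L^2[0,T]}\le\epsilon$, and then insert the preceding lemma as $\mu^{-1}\le\big(c\,H(\tfrac{1-k}{2})M/[(\tau-1)\epsilon]\big)^{1/(k+1)}$, which converts $\epsilon/\mu$ into $c\,M^{1/(k+1)}\epsilon^{k/(k+1)}$; this is the route in which the constants $P,Q$ acquire their (harmless) dependence on $\mu$. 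The real difficulty, in either bookkeeping, is the one typical of a posteriori estimates: $\mu$ is known only implicitly through the discrepancy equation, so one must trade that implicit information for an explicit power of $\epsilon$ — via the preceding lemma for the lower bound on $\mu$, and via the moment inequality (the same mechanism underlying the conditional stability of Theorem 2.1) for the interpolation — while also absorbing the additional defect produced by the operator perturbation $K\to K_\epsilon$ that comes from the noise in $\varphi$. The transition at $k=1$, where the source condition saturates, is the other delicate point and is exactly why case (b) must be separated.
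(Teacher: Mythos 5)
Your main argument is correct, but it is a genuinely different route from the paper's. The paper works directly on the spectral series: it splits $f-f_{\mu}^{\epsilon}$ into a term built from the exact $\varphi$ (with the noisy $g_{\epsilon}$) plus a term carrying the $\varphi$-perturbation, applies H\"older with exponents $\tfrac{2}{k+1}$, $\tfrac{2k}{k+1}$ to the first term so that one factor ($L_1$) is controlled by the discrepancy $\tau\epsilon$ plus $\|g-g_{\epsilon}\|$, while the other factor ($L_2$) is controlled by the $H^{2k}$ bound together with the inequality $\epsilon\mu^{-(k+1)}\lesssim M$ from the preceding lemma, and then estimates the $\varphi$-noise term exactly as in Lemma \ref{lem:3}; the case $k>1$ is reduced to $k=1$ by embedding. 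You instead run the classical variational argument: minimality of the Tikhonov functional tested at $f$, the discrepancy identity $\|K_{\epsilon}f_{\mu}^{\epsilon}-g_{\epsilon}\|=\tau\epsilon$, the source representation $f=(K^{*}K)^{k/2}w$ with $\|w\|\le c_3M$ (which is where the constants of Lemma 2.1 enter, just as in the paper's $L_3$, $L_6$), and the moment inequality $\|(K^{*}K)^{k/2}h\|\le\|h\|^{1-k}\|Kh\|^{k}$, treating the noise in $\varphi$ as an operator perturbation $\|K-K_{\epsilon}\|\le c_0\epsilon$. Both proofs hinge on the same two external inputs (the discrepancy equation and the lower bound $\mu\gtrsim(\epsilon/M)^{1/(k+1)}$), so they are comparable in strength; your version is the cleaner, textbook-structured one and makes the role of the saturation at $k=1$ transparent, while the paper's stays at the level of elementary series estimates consistent with Section 3. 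One caveat: your closing ``equivalent bookkeeping'' via Lemmas \ref{lem:3} and \ref{lem:4-1} would not actually deliver the stated rate as sketched — the bias bound $M\mu^{k/2}$ needs an \emph{upper} bound on $\mu$ in terms of $\epsilon$, which the discrepancy analysis does not directly give, and the $\epsilon/\mu^{2}$ term in Lemma \ref{lem:3} combined only with the lower bound on $\mu$ produces $\epsilon^{(k-1)/(k+1)}$, which is unbounded for $k<1$ — so keep your first argument as the proof and drop, or substantially repair, that remark.
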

\begin{proof}
For $0<k\le1$, we have
\begin{eqnarray}
\left\Vert f-f_{\mu}^{\epsilon}\right\Vert  & = & \sum_{n=1}^{\infty}\frac{1}{\Phi\left(n,\varphi\right)}\left[g_{n}-\frac{\left(\Phi\left(n,\varphi\right)\right)^{2}}{\mu^{2}+\left(\Phi\left(n,\varphi\right)\right)^{2}}g_{n}^{\epsilon}\right]\nonumber \\
 &  & +\sum_{n=1}^{\infty}\left[\frac{\Phi\left(n,\varphi\right)}{\mu^{2}+\left(\Phi\left(n,\varphi\right)\right)^{2}}-\frac{\Phi\left(n,\varphi_{\epsilon}\right)}{\mu^{2}+\left(\Phi\left(n,\varphi_{\epsilon}\right)\right)^{2}}\right]g_{n}^{\epsilon}.
\end{eqnarray}
It follows that
\begin{eqnarray}
\left\Vert f-f_{\mu}^{\epsilon}\right\Vert ^{2} & \le & 2\sum_{n=1}^{\infty}\frac{1}{\left(\Phi\left(n,\varphi\right)\right)^{2}}\left[g_{n}-\frac{\left(\Phi\left(n,\varphi\right)\right)^{2}}{\mu^{2}+\left(\Phi\left(n,\varphi\right)\right)^{2}}g_{n}^{\epsilon}\right]^{2}\nonumber \\
 &  & +2\sum_{n=1}^{\infty}\left(\left[\frac{\Phi\left(n,\varphi\right)}{\mu^{2}+\left(\Phi\left(n,\varphi\right)\right)^{2}}-\frac{\Phi\left(n,\varphi_{\epsilon}\right)}{\mu^{2}+\left(\Phi\left(n,\varphi_{\epsilon}\right)\right)^{2}}\right]g_{n}^{\epsilon}\right)^{2}.\label{eq:62}
\end{eqnarray}
We set $K_{1}$ and $K_{2}$ as follows.
\begin{equation}
K_{1}\left(n\right)=\frac{1}{\left(\Phi\left(n,\varphi\right)\right)^{2}}\left[g_{n}-\frac{\left(\Phi\left(n,\varphi\right)\right)^{2}}{\mu^{2}+\left(\Phi\left(n,\varphi\right)\right)^{2}}g_{n}^{\epsilon}\right]^{\frac{2}{k+1}}\left[g_{n}-\frac{\left(\Phi\left(n,\varphi\right)\right)^{2}}{\mu^{2}+\left(\Phi\left(n,\varphi\right)\right)^{2}}g_{n}^{\epsilon}\right]^{\frac{2k}{k+1}},
\end{equation}
\begin{equation}
K_{2}\left(n\right)=\left(\left[\frac{\Phi\left(n,\varphi\right)}{\mu^{2}+\left(\Phi\left(n,\varphi\right)\right)^{2}}-\frac{\Phi\left(n,\varphi_{\epsilon}\right)}{\mu^{2}+\left(\Phi\left(n,\varphi_{\epsilon}\right)\right)^{2}}\right]g_{n}^{\epsilon}\right)^{2}.
\end{equation}
Afterwards, we estimate $\left\Vert f-f_{\mu}^{\epsilon}\right\Vert $
by considering the following inequalities. First, we see that
\begin{equation}
\sum_{n=1}^{\infty}K_{1}\left(n\right)\le L_{1}^{\frac{k}{k+1}}L_{2}^{\frac{1}{k+1}},
\end{equation}
where
\begin{equation}
L_{1}=\sum_{n=1}^{\infty}\left(g_{n}-\frac{\left(\Phi\left(n,\varphi\right)\right)^{2}}{\mu^{2}+\left(\Phi\left(n,\varphi\right)\right)^{2}}g_{n}^{\epsilon}\right)^{2},\label{eq:66}
\end{equation}
\begin{equation}
L_{2}=\sum_{n=1}^{\infty}\left(\Phi\left(n,\varphi\right)\right)^{-2\left(k+1\right)}\left(g_{n}-\frac{\left(\Phi\left(n,\varphi\right)\right)^{2}}{\mu^{2}+\left(\Phi\left(n,\varphi\right)\right)^{2}}g_{n}^{\epsilon}\right)^{2}.
\end{equation}
Now, to estimate $B_{1}$ and $B_{2}$ as follows:
\begin{equation}
L_{1}^{\frac{1}{2}}\le\sqrt{\sum_{n=1}^{\infty}\left(g_{n}-g_{n}^{\epsilon}\right)^{2}}+\sqrt{\sum_{n=1}^{\infty}\left(g_{n}^{\epsilon}-\frac{\left(\Phi\left(n,\varphi\right)\right)^{2}}{\mu^{2}+\left(\Phi\left(n,\varphi\right)\right)^{2}}g_{n}^{\epsilon}\right)^{2}}\le\left(1+\tau\right)\epsilon.
\end{equation}
\begin{equation}
L_{2}^{\frac{1}{2}}\le\sqrt{\sum_{n=1}^{\infty}\left(\Phi\left(n,\varphi\right)\right)^{-2\left(k+1\right)}g_{n}^{2}}+\sqrt{\sum_{n=1}^{\infty}\left(\Phi\left(n,\varphi\right)\right)^{-2\left(k+1\right)}\left(\frac{\left(\Phi\left(n,\varphi\right)\right)^{2}}{\mu^{2}+\left(\Phi\left(n,\varphi\right)\right)^{2}}\right)^{2}\left(g_{n}^{\epsilon}\right)^{2}}.\label{eq:69}
\end{equation}
In (\ref{eq:69}), we continue to estimate two terms (denoted by $L_{3}$
and $L_{4}$, respectively) by direct computation.
\begin{eqnarray}
L_{3} & \le & \sqrt{\sum_{n=1}^{\infty}\left(\Phi\left(n,\varphi\right)\left(1+n^{2}\right)\right)^{-2k}\frac{\left(1+n^{2}\right)^{2k}g_{n}^{2}}{\left(\Phi\left(n,\varphi\right)\right)^{2}}}\nonumber \\
 & \le & \sqrt{\sum_{n=1}^{\infty}\left(\frac{n^{2}D_{2}}{B_{1}\left(1-e^{-D_{1}T}\right)\left(1+n^{2}\right)}\right)^{2k}\frac{\left(1+n^{2}\right)^{2k}g_{n}^{2}}{\left(\Phi\left(n,\varphi\right)\right)^{2}}}\nonumber \\
 & \le & \left(\frac{D_{2}}{B_{1}\left(1-e^{-D_{1}T}\right)}\right)^{k}M.\label{eq:70}
\end{eqnarray}
\begin{eqnarray}
L_{4} & \le & \sqrt{\sum_{n=1}^{\infty}\left(\Phi\left(n,\varphi\right)\right)^{-2\left(k+1\right)}\left(\frac{\left(\Phi\left(n,\varphi\right)\right)^{2}}{\mu^{2}+\left(\Phi\left(n,\varphi\right)\right)^{2}}\right)^{2}\left(g_{n}-g_{n}^{\epsilon}\right)^{2}}\nonumber \\
 &  & +\sqrt{\sum_{n=1}^{\infty}\left(\Phi\left(n,\varphi\right)\right)^{-2\left(k+1\right)}\left(\frac{\left(\Phi\left(n,\varphi\right)\right)^{2}}{\mu^{2}+\left(\Phi\left(n,\varphi\right)\right)^{2}}\right)^{2}\left(g_{n}\right)^{2}}.\label{eq:71}
\end{eqnarray}
In (\ref{eq:71}), we denote two terms in the right-hand side by $L_{5}$
and $L_{6}$. Then, using Theorem \ref{thm:8} and (\ref{eq:70})
we estimate them as follows.
\begin{eqnarray}
L_{5} & \le & \sqrt{\sum_{n=1}^{\infty}\mu^{-2\left(k+1\right)}\left(\frac{\left(\frac{\mu}{\Phi\left(n,\varphi\right)}\right)^{k+1}}{\left(\frac{\mu}{\Phi\left(n,\varphi\right)}\right)^{2}+1}\right)^{2}\left(g_{n}-g_{n}^{\epsilon}\right)^{2}}\nonumber \\
 & \le & \mu^{-\left(k+1\right)}H\left(\frac{k+1}{2}\right)\epsilon\nonumber \\
 & \le & \frac{P}{\tau-1}H\left(\frac{1-k}{2}\right)H\left(\frac{k+1}{2}\right)M\nonumber \\
 & \le & \frac{P}{\tau-1}\left(H\left(\frac{1-k}{2}\right)\right)^{2}M.\label{eq:72}
\end{eqnarray}
\begin{equation}
L_{6}\le\sqrt{\sum_{n=1}^{\infty}\left(\Phi\left(n,\varphi\right)\right)^{-2\left(k+1\right)}\left(g_{n}\right)^{2}}\le\left(\frac{D_{2}}{B_{1}\left(1-e^{-D_{1}T}\right)}\right)^{k}M.\label{eq:73}
\end{equation}
Therefore, from (\ref{eq:66})-(\ref{eq:73}) we have
\begin{equation}
\sum_{n=1}^{\infty}K_{1}\left(n\right)\le\left[\left(1+\tau\right)\epsilon\right]^{\frac{2k}{k+1}}\left[2\left(\frac{D_{2}}{B_{1}\left(1-e^{-D_{1}T}\right)}\right)^{k}M+\frac{P}{\tau-1}\left(H\left(\frac{1-k}{2}\right)\right)^{2}M\right]^{\frac{2}{k+1}}.\label{eq:74}
\end{equation}
Next, estimating $K_{2}\left(n\right)$ is in process. Similarly,
we look back (\ref{eq:33})-(\ref{eq:35}), then see that
\begin{eqnarray}
\sqrt{\sum_{n=1}^{\infty}K_{2}\left(n\right)} & \le & \sqrt{\sum_{n=1}^{\infty}\left(\left[\frac{\Phi\left(n,\varphi\right)}{\mu^{2}+\left(\Phi\left(n,\varphi\right)\right)^{2}}-\frac{\Phi\left(n,\varphi_{\epsilon}\right)}{\mu^{2}+\left(\Phi\left(n,\varphi_{\epsilon}\right)\right)^{2}}\right]\left(g_{n}^{\epsilon}-g_{n}\right)\right)^{2}}\nonumber \\
 &  & +\sqrt{\sum_{n=1}^{\infty}\left(\left[\frac{\Phi\left(n,\varphi\right)}{\mu^{2}+\left(\Phi\left(n,\varphi\right)\right)^{2}}-\frac{\Phi\left(n,\varphi_{\epsilon}\right)}{\mu^{2}+\left(\Phi\left(n,\varphi_{\epsilon}\right)\right)^{2}}\right]g_{n}\right)^{2}}\nonumber \\
 & \le & \left\Vert \varphi_{\epsilon}-\varphi\right\Vert _{L^{2}\left[0,T\right]}\left(P\left\Vert g_{\epsilon}-g\right\Vert +\left\Vert f\right\Vert \right).\label{eq:75}
\end{eqnarray}
Combining (\ref{eq:62}), (\ref{eq:74}) and (\ref{eq:75}) gives
the first desired. Moreover, we can obtain the second by embedding
$H^{2k}$ into $H^{1}$.
\end{proof}
\section{Numerical Examples}
In this section, we \textcolor{red}{shall implement our} proposed regularization methods.
Two different numerical examples \textcolor{red}{(choosing $k=T=1$)} are shown. The first example is to consider an example with $a$ in Eq. (2) is a constant, and the function
$f$ obtained from exact data function. The second example is to
consider an example with $a$ is a non-constant function, and $f$
obtained from observation data of $g$ and $\varphi$. \\
The couple of $\left(g_{\epsilon},\varphi_{\epsilon}\right)$, which \textcolor{red}{is} determined below, plays as measured data with a random noise as follows:
\begin{eqnarray}
g_{\epsilon}\left(.\right) & = & g\left(.\right)\left(1+\frac{\epsilon\cdot\mbox{rand}\left(.\right)}{\left\Vert g\right\Vert }\right),\\
\varphi_{\epsilon}\left(.\right) & = & \varphi\left(.\right)+\epsilon\cdot\mbox{rand}\left(.\right),
\end{eqnarray}
where $\mathtt{rand(\cdot)} \in (-1,1)$ is a random number. \textcolor{red}{By those measured ways}, we can easily verify the validity of the inequality:
$$
\| g - g_{\epsilon}  \| \leq \epsilon
$$
and
$$
\|\varphi  - \varphi_{\epsilon} \| \leq \epsilon
$$
In addition, \textcolor{red}{there is a possibility of taking} the regularization parameter for the a priori
parameter choice rule $\mu=\left(\dfrac{\epsilon}{M}\right)^{\frac{1}{3}}$
where $M$ plays a role as a priori condition computed by $\left\Vert f\right\Vert _{H^{2}\left(0,\pi\right)}$.
The absolute and relative errors between regularized and exact solutions are \textcolor{red}{computed}. 
On the other hand, the regularized solution are \textcolor{red}{defined} by:
\begin{equation}
f_{\mu}^{\epsilon}\left(x\right)=\dfrac{2}{\pi}\sum_{n=1}^{N}\frac{\Phi\left(n,\varphi_{\epsilon}\right)}{\mu^{2}+\left(\Phi\left(n,\varphi_{\epsilon}\right)\right)^{2}}\left\langle g_{\epsilon}\left(x\right),\sin\left(nx\right)\right\rangle \sin\left(nx\right),
\end{equation}
\begin{equation}
\Phi\left(n,\varphi_{\epsilon}\right)=\int_{0}^{1}e^{n^{2}\left(A\left(t\right)-A\left(1\right)\right)}\varphi_{\epsilon}\left(t\right)dt,
\end{equation}
where $N$ is the truncation number; whereby $N =1000$ is chosen in the examples.\\
In general, the whole numerical procedure is shown in the following steps:\\
\textbf{Step 1.} Choosing $L$ and $K$ to generate temporal and spatial discretizations as follows:
\begin{equation}
x_{j}=j\Delta x,\Delta x=\frac{\pi}{K},j=\overline{0,K},
\end{equation}
\begin{equation}
t_{i}=i\Delta t,\Delta t=\frac{1}{L},i=\overline{0,L}.
\end{equation}
\textcolor{red}{Obviously}, the higher value of $L$ and $K$ will provide more stable and accurate numerical calculation, however in \textcolor{red}{our} examples $L=K=100$ are satisfied.\\
\textbf{Step 2.} Setting $f_{\mu}^{\epsilon}\left(x_{j}\right)=f_{\mu,j}^{\epsilon}$
and $f\left(x_{j}\right)=f_{j}$, constructing two vectors \textcolor{red}{containing} all
discrete values of $f_{\mu}^{\epsilon}$ and $f$ denoted by $\Lambda_{\mu}^{\epsilon}$
and $\Psi$, respectively.
\begin{equation}
\Lambda_{\mu}^{\epsilon}=\begin{bmatrix}f_{\mu,0}^{\epsilon} & f_{\mu,1}^{\epsilon} & ... & f_{\mu,K-1}^{\epsilon} & f_{\mu,K}^{\epsilon}\end{bmatrix}\in\mathbb{R}^{K+1},
\end{equation}
\begin{equation}
\Psi=\begin{bmatrix}f_{0} & f_{1} & ... & f_{K-1} & f_{K}\end{bmatrix}\in\mathbb{R}^{K+1}.
\end{equation}
\textbf{Step 3.} Error estimate between the exact solution and regularized solution;\\
Absolute error estimation:
\begin{eqnarray}
E_{1} & = & \sqrt{\frac{1}{K+1}\sum_{j=0}^{K}\left|f_{\mu}^{\epsilon}\left(x_{j}\right)-f\left(x_{j}\right)\right|^{2}},\\
\end{eqnarray}
Relative error estimation:
\begin{eqnarray}
E_{2} & = & \frac{\sqrt{\sum_{j=0}^{K}\left|f_{\mu}^{\epsilon}\left(x_{j}\right)-f\left(x_{j}\right)\right|^{2}}}{\sqrt{\sum_{j=0}^{K}\left|f\left(x_{j}\right)\right|^{2}}}.
\end{eqnarray}
\subsection{Example 1}
As mentioned above, in this example we consider $a$ is constant, and $f$ is an exact data function. Specifically, we consider a type of the problem (\ref{eq:2})-(\ref{eq:4}) as follows
\begin{equation}
\begin{cases}
u_{t}-u_{xx}=2^{-1}\left(e^{t}-1\right)\sin2x; & \left(x,t\right)\in\left(0,\pi\right)\times\left(0,1\right),\\
u\left(x,0\right)=0,\quad u\left(x,1\right)=10^{-1}\left(e-1\right)\sin2x; & x\in\left[0,\pi\right],\\
u\left(0,t\right)=u\left(\pi,t\right)=0; & t\in\left[0,1\right],
\end{cases}
\end{equation}
This implies that $a\left(t\right)=1,\varphi\left(t\right)=e^{t}-1,g\left(x\right)=10^{-1}\left(e-1\right)\sin2x$
and $f\left(x\right)=2^{-1}\sin2x$. \\
It is easy to see that $u\left(x,t\right)=10^{-1}\left(e^{t}-1\right)\sin2x$ is the unique solution of the problem.
Next, we establish the regularized
solution according to composite Simpson's rule.
\begin{equation}
f_{\mu}^{\epsilon}\left(x\right)=\frac{e-1}{10}\left(1+\frac{\epsilon\cdot\mbox{rand}\left(.\right)}{\sqrt{\pi}\left\Vert g\right\Vert }\right)\frac{\Phi\left(2,\varphi_{\epsilon}\right)}{\mu^{2}+\left(\Phi\left(2,\varphi_{\epsilon}\right)\right)^{2}}\sin2x,
\end{equation}
\begin{equation}
\Phi\left(2,\varphi_{\epsilon}\right)=\frac{1}{3M}\left[h\left(t_{0}\right)+2\sum_{i=1}^{\frac{L}{2}-1}h\left(t_{2i}\right)+4\sum_{i=1}^{\frac{L}{2}}h\left(t_{2i-1}\right)+h\left(t_{M}\right)\right],
\end{equation}
\begin{equation}
h\left(t_{i}\right)=e^{4\left(t_{i}-1\right)}\left(\varphi\left(t_{i}\right)+\epsilon\cdot\left|\mbox{rand}\left(t_{i}\right)\right|\right).
\end{equation}
In practice, it is very difficult to obtain the value of $M$ without having
an exact solution. We thus try taking $M=1000$ leading to $\mu_{1}=\dfrac{\epsilon^{\frac{1}{3}}}{10}$
for the a priori parameter choice rule, and $\mu_{2}=\dfrac{\epsilon^{\frac{9}{20}}}{40}$
for the a posteriori parameter choice rule based on (\ref{eq:59})
with $\tau=1.5$.
\subsection{Example 2}
Similar to the first example, however in this example we consider $a\left(t\right)=2t+1$, then $A\left(t\right)-A\left(1\right)=t^{2}+t-2$; we choose
\begin{equation}
g\left(x\right)=e^{3}\sum_{m=1}^{3}\sin\left(mx\right),\quad\varphi\left(t\right)=1.
\end{equation}
Thus, the exact solution is obtained by:
\begin{equation}
f\left(x\right)=\dfrac{2e^{3}}{\pi}\sum_{n=1}^{1000}\frac{1}{\Phi\left(n,1\right)}\sum_{m=1}^{3}\left\langle \sin\left(mx\right),\sin\left(nx\right)\right\rangle \sin\left(nx\right),\label{eq:92}
\end{equation}
\begin{equation}
\Phi\left(n,1\right)=\frac{1}{n^{2}}\left(1-e^{-2n^{2}}\right).\label{eq:93}
\end{equation}
Unlike the first example, from the analytical
solution, we can have $\left\Vert f\right\Vert _{H^{2}\left(0,\pi\right)}<5500$
which implies that $\mu_{1}=\left(\dfrac{\epsilon}{5500}\right)^{\frac{1}{3}}$
for the a priori parameter choice rule. Afterwards, based on (\ref{eq:59})
with $\tau=1.1$ again, we can compute the regularization parameter
for the a posteriori parameter choice rule, $\mu_{2}=\dfrac{\epsilon^{\frac{1}{2}}}{1100}$.
Therefore, the regularized solution can be computed by
\begin{equation}
f_{\mu}^{\epsilon}\left(x\right)=e^{3}\left(1+\frac{\epsilon\cdot\mbox{rand}\left(.\right)}{\sqrt{\pi}\left\Vert g\right\Vert }\right)\sum_{n=1}^{3}\frac{n^{2}\left(1-e^{-2n^{2}}\right)\left(1+\epsilon\cdot\mbox{rand}\left(.\right)\right)}{n^{4}\mu^{2}+\left(\left(1+\epsilon\cdot\mbox{rand}\left(.\right)\right)\left(1-e^{-2n^{2}}\right)\right)^{2}}\sin\left(nx\right).
\end{equation}
\\
Tables \ref{tab:2} and \ref{tab:1} show the absolute and relative error estimates between the exact solution and its regularized solution for both, the a priori and the a posteriori, parameter choice rules in the numerical examples. In the first example as shown in Table 1; when $a$ is constant, and $f$ is an exact data function; it shows the convergence speed of both parameter choice rule methods are quite similar and slow as $\epsilon$ tends to $0$. Whereas, in the second example shown in Table 2; when $a$ is not a constant, and $f$ is obtained from measured data; it shows the convergence speed of the a posteriori parameter choice rule is better than (by second order) the a priori parameter choice rule as $\epsilon$ tends to $0$.

In addition, Figures 1 and 2 show a comparison between the exact solution and its regularized solution for the a priori parameter and the a posteriori parameter choice rules in the first example, respectively. It again shows that for the both parameter choice rule methods, the regularized solution was strong oscillated around the exact solution when $\epsilon$ around $0.1$; nevertheless it converges to the exact solution as $\epsilon$ tends to $0$. In the second example, Figures 3 and 4 show the same tendency as in the first example for both methods. 

\section{Conclusion}

In this study, we solved the problem (\ref{eq:2})-(\ref{eq:4}) to recover temperature function of the unknown sources in the parabolic equation with the time-dependent
coefficient (i.e. inhomogeneous source) by suggesting two methods, the
a priori and a posteriori parameter choice rules. 

In the theoretical results, we obtained the error estimates of both methods based on a priori condition. From the numerical results, it shows that the regularized solutions are converged to the exact solutions. Furthermore, it also shows that the a posteriori parameter choice rule method is better than the a priori parameter choice rule method in term of the convergence speed.

\section*{Acknowledgments} This research is funded by Foundation for Science and Technology
Development of Ton Duc Thang University (FOSTECT) under project FOSTECT.2014.BR.03.

%\end{document}

\newpage

\begin{table}
\noindent \begin{centering}
\begin{tabular}{|c|c|c|c|c|}
\hline 
$\epsilon$ & $E_{1}^{\mu_{1}}$ & $E_{2}^{\mu_{1}}$ & $E_{1}^{\mu_{2}}$ & $E_{2}^{\mu_{2}}$\tabularnewline
\hline 
5.0E-01 & 2.19973047E-01 & 6.25280883E-01 & 2.37722314E-01 & 6.75733186E-01\tabularnewline
\hline 
1.0E-01 & 4.84352308E-02 & 1.37678794E-01 & 5.74458438E-02 & 1.63291769E-01\tabularnewline
\hline 
5.0E-02 & 2.61788488E-02 & 7.44142701E-02 & 2.83719452E-02 & 8.06482211E-02\tabularnewline
\hline 
1.0E-02 & 8.18020277E-03 & 2.32525051E-02 & 8.06299020E-03 & 2.29193244E-02\tabularnewline
\hline 
5.0E-03 & 5.77361943E-03 & 1.64117100E-02 & 5.73993055E-03 & 1.63159482E-02\tabularnewline
\hline 
1.0E-03 & 4.80971954E-03 & 1.36717917E-02 & 4.54649392E-03 & 1.29235639E-02\tabularnewline
\hline 
5.0E-04 & 4.67487514E-03 & 1.32884919E-02 & 4.54039711E-03 & 1.29062335E-02\tabularnewline
\hline 
1.0E-04 & 4.48497982E-03 & 1.27487080E-02 & 4.41615901E-03 & 1.25530824E-02\tabularnewline
\hline 
5.0E-05 & 4.44987772E-03 & 1.26489290E-02 & 4.41003343E-03 & 1.25356703E-02\tabularnewline
\hline 
1.0E-05 & 4.41942400E-03 & 1.25623633E-02 & 4.40598999E-03 & 1.25241767E-02\tabularnewline
\hline 
\end{tabular}
\par\end{centering}

\caption{Error estimate between the exact solution and its regularized solution for the
a priori parameter and the a posteriori parameter choice rules in Example 1.\label{tab:2}}
\end{table}

\begin{table}
\noindent \begin{centering}
\begin{tabular}{|c|c|c|c|c|}
\hline 
$\epsilon$ & $E_{1}^{\mu_{1}}$ & $E_{2}^{\mu_{1}}$ & $E_{1}^{\mu_{2}}$ & $E_{2}^{\mu_{2}}$\tabularnewline
\hline 
5.0E-01 & 8.73911561E+01 & 6.23557272E-01 & 1.68095756E+02 & 1.19940433E+00\tabularnewline
\hline 
1.0E-01 & 1.73478908E+01 & 1.23781444E-01 & 1.68873801E+01 & 1.20495587E-01\tabularnewline
\hline 
5.0E-02 & 9.04295359E+00 & 6.45236855E-02 & 1.05889724E+01 & 7.55549079E-02\tabularnewline
\hline 
1.0E-02 & 2.51478744E+00 & 1.79436234E-02 & 1.77559899E+00 & 1.26693331E-02\tabularnewline
\hline 
5.0E-03 & 1.33146975E+00 & 9.50036234E-03 & 7.94414569E-01 & 5.66834224E-03\tabularnewline
\hline 
1.0E-03 & 4.04066751E-01 & 2.88311509E-03 & 1.79227628E-01 & 1.27883296E-03\tabularnewline
\hline 
5.0E-04 & 2.24155836E-01 & 1.59940671E-03 & 8.74185405E-02 & 6.23752667E-04\tabularnewline
\hline 
1.0E-04 & 7.42023262E-02 & 5.29451745E-04 & 1.85142488E-02 & 1.32103693E-04\tabularnewline
\hline 
5.0E-05 & 4.58148569E-02 & 3.26900209E-04 & 9.69012336E-03 & 6.66814607E-05\tabularnewline
\hline 
1.0E-05 & 1.56186641E-02 & 1.11442988E-04 & 1.86120018E-03 & 1.32801185E-05\tabularnewline
\hline 
\end{tabular}
\par\end{centering}

\caption{Error estimation between the exact solution and its regularized solution for both the a priori parameter choice rule and the a posteriori parameter choice
rule in Example 2.\label{tab:1}}
\end{table}
% Figure 01
\begin{figure}[h!]
\begin{center}
	\begin{center}
		\subfigure[$\epsilon=10^{-1}$]{	\includegraphics[scale=0.50]{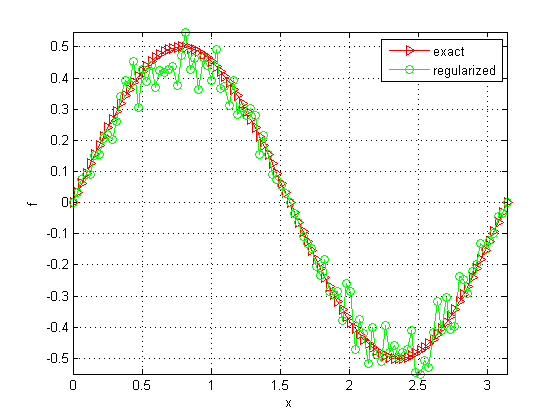} }
		\subfigure[$\epsilon=10^{-2}$]{ \includegraphics[scale=0.50]{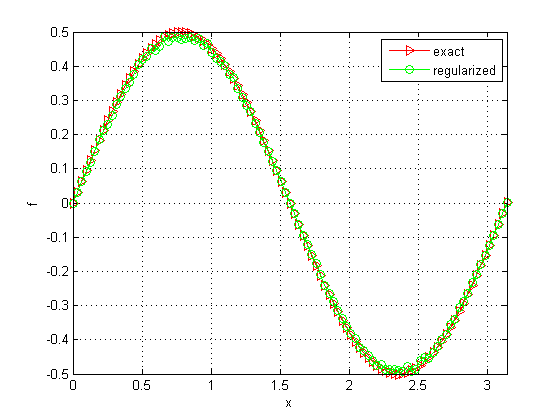} }
	\end{center}
	\begin{center}
		\subfigure[$\epsilon=10^{-3}$]{ \includegraphics[scale=0.50]{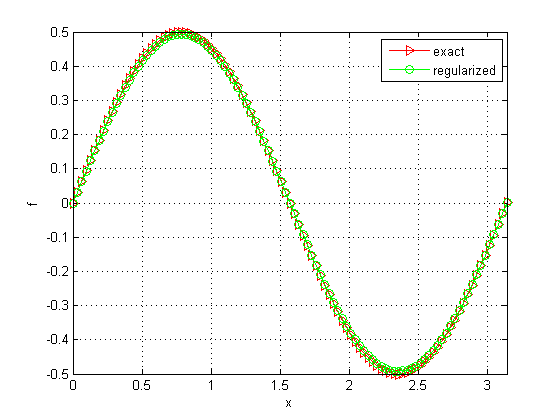} }
		\subfigure[$\epsilon=10^{-4}$]{ \includegraphics[scale=0.50]{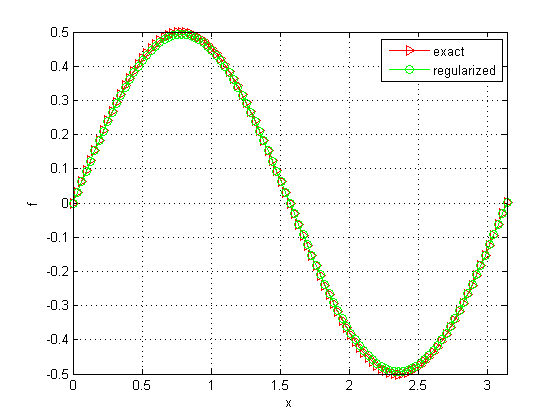} }
	\end{center}
\end{center}
\caption{A comparison between the exact solution and its regularized solution for the a priori parameter choice rule in Example 1}

\label{exp1-fig}
\end{figure}

% Figure 02
\begin{figure}[h!]
\begin{center}
	\begin{center}
		\subfigure[$\epsilon=10^{-1}$]{	\includegraphics[scale=0.50]{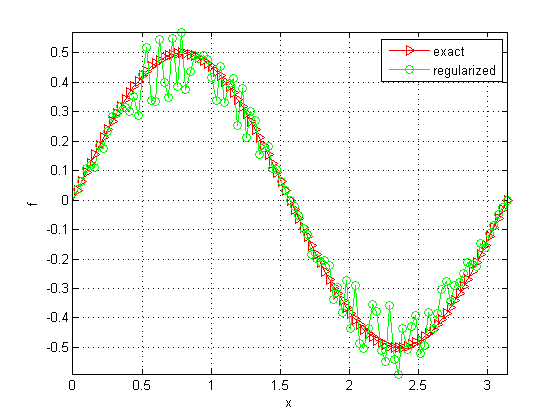} }
		\subfigure[$\epsilon=10^{-2}$]{ \includegraphics[scale=0.50]{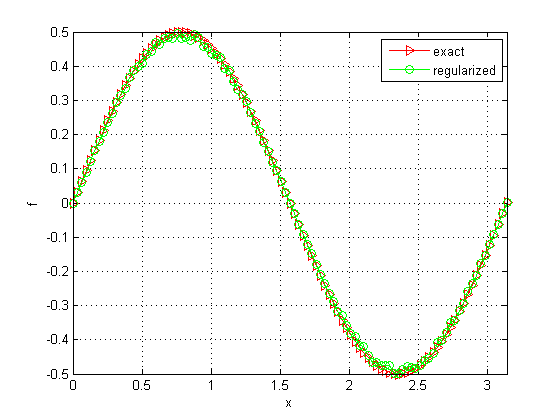} }
	\end{center}
	\begin{center}
		\subfigure[$\epsilon=10^{-3}$]{ \includegraphics[scale=0.50]{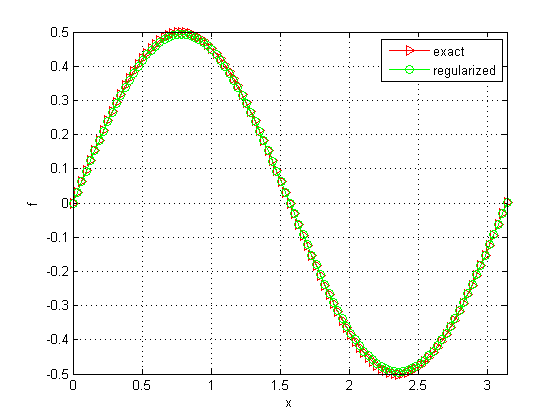} }
		\subfigure[$\epsilon=10^{-4}$]{ \includegraphics[scale=0.50]{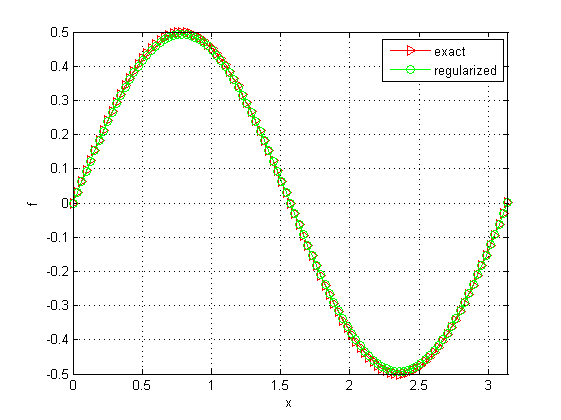} }
	\end{center}
\end{center}
\caption{A comparison between the exact solution and its regularized solution for the a posteriori parameter choice rule in Example 1}
\label{exp1-fig}
\end{figure}

% Figure 03
\begin{figure}[h!]
\begin{center}
	\begin{center}
		\subfigure[$\epsilon=5\times 10^{-1}$]{	\includegraphics[scale=0.50]{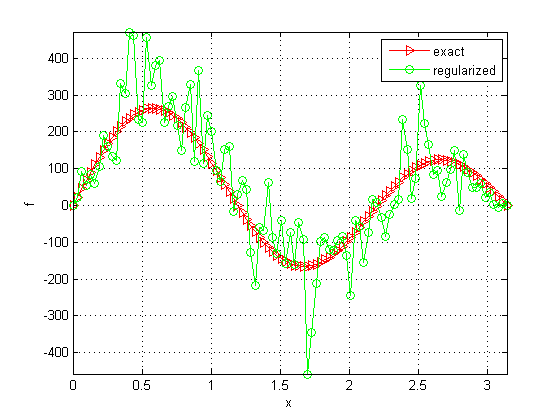} }
		\subfigure[$\epsilon=5\times 10^{-2}$]{ \includegraphics[scale=0.50]{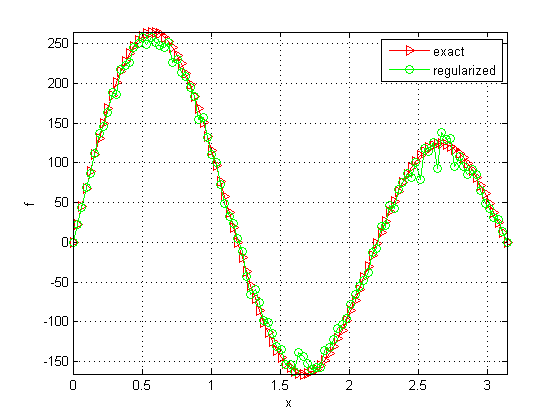} }
	\end{center}
	\begin{center}
		\subfigure[$\epsilon=5\times 10^{-3}$]{ \includegraphics[scale=0.50]{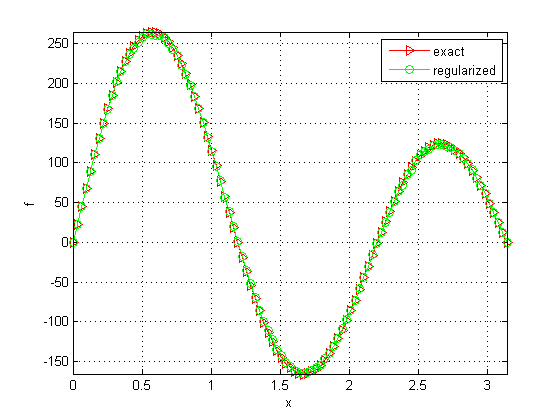} }
		\subfigure[$\epsilon=5\times 10^{-4}$]{ \includegraphics[scale=0.50]{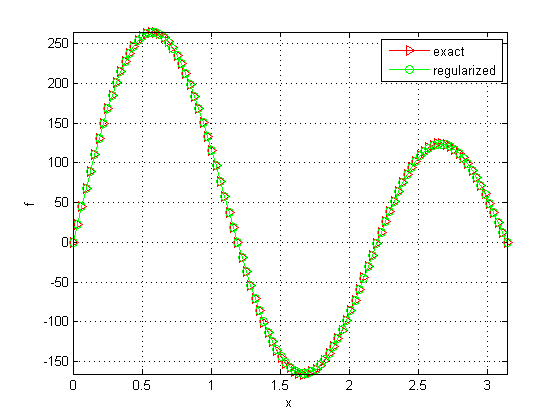} }
	\end{center}
\end{center}
\caption{A comparison between the exact solution and its regularized solution for the a priori parameter choice rule in Example 2}
\label{exp1-fig}
\end{figure}

% Figure 04
\begin{figure}[h!]
\begin{center}
	\begin{center}
		\subfigure[$\epsilon=5\times 10^{-1}$]{	\includegraphics[scale=0.50]{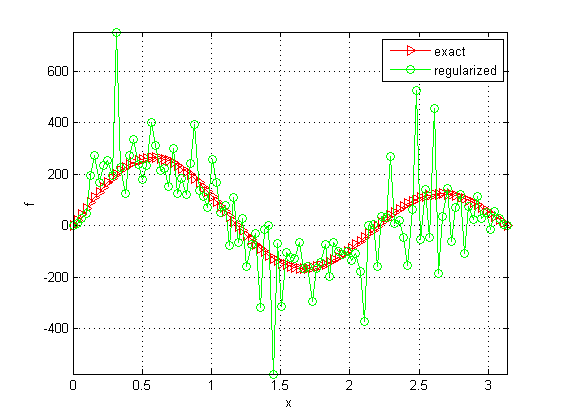} }
		\subfigure[$\epsilon=5\times 10^{-2}$]{ \includegraphics[scale=0.50]{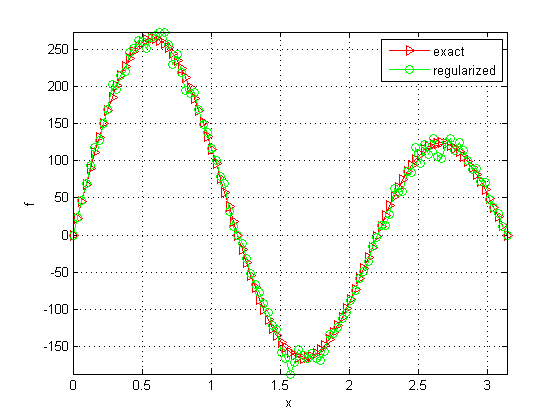} }
	\end{center}
	\begin{center}
		\subfigure[$\epsilon=5\times 10^{-3}$]{ \includegraphics[scale=0.50]{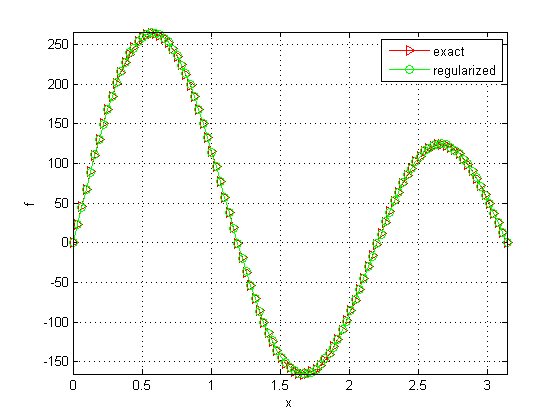} }
		\subfigure[$\epsilon=5\times 10^{-4}$]{ \includegraphics[scale=0.50]{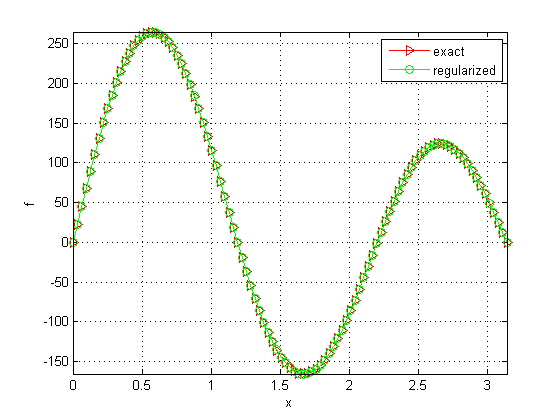} }
	\end{center}
\end{center}
\caption{A comparison between the exact solution and its regularized solution for the a posteriori parameter choice rule in Example 2}
\label{exp1-fig}
\end{figure}


\begin{thebibliography}{10}
\bibitem{AB} J. Atmadja, A.C. Bagtzoglou, \emph{Marching-jury
backward beam equation and quasi-reversibility methods for hydrologic
inversion: Application to contaminant plume spatial distribution recovery}.
WRR 39, 1038C1047 (2003).\label{re1}

\bibitem{key-1} J.R. Cannon, P. Duchateau, \emph{Structural identification
of an unknown source term in a heat equation}, Inverse Problems 14
(1998) 535-551.

\bibitem{CF} Wei Cheng, C.L. Fu, \emph{Identifying an unknown
source term in a spherically symmetric parabolic equation}, Applied
Mathematics Letters, Vol 26, (2013) 387-391.

\bibitem{key-2} M. Denche, K. Bessila, \emph{A modified quasi-boundary
value method for ill-posed problems},\emph{ }J.Math.Anal.Appl, Vol.
301, 2005, pp. 419-426.

\bibitem{Sa} E. G. Savateev, \emph{On problems of determining
the source function in a parabolic equation}, J. Inverse Ill-Posed
Probl. 3 (1995) 83-102.

\bibitem{FL} A. Farcas, D. Lesnic, \emph{The boundary-element
method for the determination of a heat source dependent on one variable},
J. Eng. Math. 54 (2006) 375-388.

\bibitem{Ha} A. Hasanov, \emph{Identification of spacewise and
time dependent source terms in 1D heat conduction equation from temperature
measurement at a final time}. Int. J. Heat Mass Transf. 55, 2069-2080
(2012).

\bibitem{JL} T. Johansson, D. Lesnic, \emph{Determination of a
spacewise dependent heat source}, J. Comput. Appl. Math. 209 (2007)
66-80.

\bibitem{key-4} A. Qian, Y. Li, \emph{Optimal error bound and generalized
Tikhonov regularization for identifying an unknown source in the heat
equation}, J. Math. Chem. 49 (2011), No. 3, 765-775.

\bibitem{key-5} D.D. Trong, N.T. Long, P.N. Dinh Alain, \emph{Nonhomogeneous
heat equation: Identification and regularization for the inhomogeneous
term}, J. Math. Anal. Appl. 312 (2005) 93-104.

\bibitem{key-6} D.D. Trong, P.H. Quan, P.N.D. Alain, \emph{Determination
of a two dimensional heat source: uniqueness, regularization and error
estimate}, J. Comput. Appl. Math. 191 (2006) 50-67.

\bibitem{key-7} L. Yang, C.L. Fu, F.L. Yang, \emph{The method of
fundamental solutions for the inverse heat source problem}, Eng. Anal.
Bound. Elem. 32 (2008) 216-222.

\bibitem{YF} F. Yang, C.L-Fu, \emph{Two regularization methods
for identification of the heat source depending only on spatial variable
for the heat equation}. J. Inverse Ill-Posed Probl. 17 (2009), No.
8, 815-830.

\bibitem{YF1} F. Yang, C.L-Fu, \emph{A simplified Tikhonov regularization
method for determining the heat source}, Applied Mathematical Modelling,
Vol 34, (2010), 3286-3299.

\bibitem{YF2} F. Yang, C.L-Fu, \emph{A mollification regularization
method for the inverse spatial-dependent heat source problem}, J.
Comput. Appl. Math, Vol 255, (2014) 555-567.

\bibitem{key-5} D.D. Trong, N. H. Tuan, \emph{A nonhomogeneous backward
heat problem: Regularization and error estimates}, Electron. J. Diff.
Eqns., Vol. 2008 , No. 33, pp. 1-14.

\bibitem{key-7} Z. Wanga and J. Liu, \emph{Identification of the
pollution source from one-dimensional parabolic equation models},
Appl. Math. Comput. 219 (2012), No. 8, 3403-3413.

\bibitem{Sc} O. Scherzer, \emph{The use of Morozov's discrepancy
principle for Tikhonov regularization for solving nonlinear ill-posed
problems}, Computing, Vol. 51, (1993) pp 45-60. 

\bibitem{CP} D. Coltony, M. Pianayand, R. Potthast, \emph{A simple
method using Morozov’s discrepancy principle for solving inverse scattering
problems}, Inverse Problems \textbf{13} (1997) 1477–1493.

\bibitem{Ki} A. Kirsch, \emph{An Introduction to the Mathematical
Theory of Inverse Problems (Second Edition)}, Applied Mathematical
Sciences 120, Springer, (2011)

\bibitem{Mc} D. McLaughlin, \emph{Investigation of alternative procedures for estimating groundwater basin parameters}, Water Res. Eng., Walnut Creek, Calif., (1975)

\bibitem{Ye} W. W-G. Yeh, \emph{Review of Parameter Identification Procedures in Groundwater Hydrology: The Inverse Problem}, Water Res. Research, Vol. 2(2), (1986)

\bibitem{Ca} J. Carrera, \emph{State of the art of the inverse problem applied to the flow and solute transport problem, in Groundwater Flow and Quality Modeling}, NATO ASI Ser., (1987)

\bibitem{GC} T. R. Ginn and J. H. Cushman \emph{Inverse methods for subsurface flow: A critical review of stochastic techniques}, Stochastic Hydrol. Hydraul., (1990)

\bibitem{Ku} L. Kuiper, \emph{A comparison of several methods for solution of the inverse problem in two-dimensional steady state groundwater flow modeling}, Water Res. Research, Vol. 22(5), (1986)

\bibitem{Su} N.Z. Sun, \emph{Inverse Problems in Groundwater Modeling}, Kluwer Acad., Norwell, Mass. (1994)

\bibitem{MT} D. McLaughlin and L. R. Townley, \emph{A reassessment of the groundwater inverse problem}, Water Res. Research, Vol. 32(5), (1996)

\bibitem{PH} E. P. Poeter and M. C. Hill, \emph{Inverse Models: A necessary next step in groundwater modeling}, Groundwater, Vol. 35(2), (1997)

\end{thebibliography}
\end{document}